\title{Invertible spectra of finite type}
\author{Guchuan Li}
\address{Department of Mathematics, University of Michigan}
\email{guchuan@umich.edu}
 \newtheorem{thm}{Theorem}[section]
  \newtheorem{prop}[thm]{Proposition}
 \newtheorem{lem}[thm]{Lemma}
 \newtheorem{cor}[thm]{Corollary}
 \numberwithin{equation}{section}
  \newtheorem{que}[thm]{Question} 
  \newtheorem*{thm*}{Theorem}
\theoremstyle{definition}
 \newtheorem{defn}[thm]{Definition} 
\theoremstyle{remark}
\newtheorem{rem}[thm]{Remark}
 \newtheorem{ex}[thm]{Example}
\newtheoremstyle{case}{}{}{}{}{}{:}{ }{}
\theoremstyle{case}
\newtheorem{case}{Case}
\def\Z{\mathbb{Z}}
\def\G{\mathbb{G}_2}
\def\R{{\mathfrak{Z}}}
\DeclareMathOperator{\holim}{holim}
\DeclareMathOperator{\limone}{lim^1}
\DeclareMathOperator{\Gal}{Gal}
\newcommand{\id}{\mathrm{id}}
\newcommand{\Zp}{\mathbb{Z}_p}
\begin{document} 
\renewcommand{\arraystretch}{2}
\maketitle

\begin{abstract}
We describe the necessary and sufficient numerical condition when an element $X$ in the Picard group of $K(2)$-local category at prime $p \geqslant 5$ is of finite type, i.e., $\pi_kX$ is finitely generated as a $\Z_p$-module for all $k \in \Z$. 
\end{abstract}


\section{Introduction}
A long-standing open question in chromatic homotopy theory is whether the homotopy groups of $K(n)$-local sphere are finitely generated over $\Zp$ in each degree (\cite{MR2458150}, Conjecture 6.5 in \cite{handbook} Chapter 5). This is the first problem in the \href{https://www-users.cse.umn.edu/~tlawson/hovey/morava.html}{Morava $K$- and $E$-theory} section of Mark Hovey's algebraic topology problem list\footnote{https://www-users.cse.umn.edu/~tlawson/hovey/morava.html}.  A positive answer would follow from the chromatic splitting conjecture \cite{BBP} \cite{handbook}.  One could ask the same question for any invertible spectrum $X$ in $K(n)$-local category.  At height 2, prime $p \geqslant 5$, Hovey and Strickland show that this is not true for most elements in the Picard groups \cite[Proposition~15.7]{MR1601906}.  
From now on, $p \geqslant 5$ is a fixed prime and all spectra are $K(2)$-local.  Let $Pic_{K(2)}$ be the Picard group of $K(2)$-local category.  
We say a $p$-local spectrum $X$ is of finite type if $\pi_k X$ is finitely generated as a $\Zp$-module in all degrees $k$ in $\Z$.  A natural question is:
\begin{que} \label{question}
Which elements of $Pic_{K(2)}$ are of finite type?
\end{que}

Hovey and Strickland's argument is not constructive and does not give a criterion for answering this question. We answer Question \ref{question} by the following necessary and sufficient condition.  It is a condition on coefficients in a ``$p$-adic''-like expansion of a number associated to $X$.

Let $Pic_{K(2)}^0$ be the index $2$ subgroup of invertible elements whose $(E_2)_*$ homology is zero in odd degrees.  Since $X$ is of finite type if and only if $\Sigma X$ is. It is sufficient to analyze which elements of $Pic_{K(2)}^0$ are of finite type. In Corollary \ref{cor:e}, we give a concrete description of the map \cite[Page 11]{Goerss}
\[
e \colon Pic_{K(2)}^0 \rightarrow H^1(\mathbb{G}_2,(E_2/p)^\times_0) \cong \lim_k \Z/p^k(p^2-1).
\]
We denote the $\Zp$-module $\displaystyle\lim_k \Z/p^k(p^2-1) \cong \Zp \oplus \Z/(p^2-1)$ by $\R$. In Section \ref{subsec:alpha}, we will see that $\R$ is a direct summand in $Pic^0_{K(2)}.$
	\begin{defn}{\label{padicexpansion}}
	For an element $\alpha \in \R$, we define the $\R$-expansion of $\alpha$ to be the unique expansion of the following form 
	\[
	\alpha = a_0 + (p^2-1) \sum^{\infty}_{i=1} a_ip^{i-1} \,\,\, \mbox{for $a_i \in \Z$, 
	$0 \leqslant a_0 < p^2-1$, $0 \leqslant a_i < p$}.
	\]
	We call $a_i$ the $i$\textsuperscript{th} $\R$-expansion coefficients.
	\end{defn}

\begin{thm*}[\Cref{result}]{\label{thm:mainresult}} Let $X \in Pic_{K(2)}^0$.  Then $X$ is of finite type if and only if the $\R$-expansion coefficients $\{a_i\}$ of $e(X)$ contain only either finitely many zero entries or finitely many nonzero entries.
\end{thm*}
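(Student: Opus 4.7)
The approach is to compute $\pi_* X$ via the $K(2)$-local descent spectral sequence
\[
H^s(\mathbb{G}_2,(E_2)_t X) \Rightarrow \pi_{t-s} X,
\]
and to translate the finite generation of each homotopy group into a combinatorial condition on $e(X)$. The strategy splits naturally according to the two ``tame'' regimes of the $\R$-expansion and the ``wild'' intermediate regime excluded by the theorem.

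First, if only finitely many $a_i$ are nonzero, then the $\R$-expansion represents an honest nonnegative integer $n$; using \Cref{cor:e} and the splitting of $\R$ off $Pic^0_{K(2)}$ from Section \ref{subsec:alpha}, I would identify $X$ (up to an $E_2$-equivalence accounted for by the $\Z/(p^2-1)$-torsion factor) with $\Sigma^{2n} S_{K(2)}$ and invoke the known finite generation of $\pi_* S_{K(2)}$ at $p\geq 5$. Second, if only finitely many $a_i$ are zero then the tail of the expansion converges to a $p$-adic unit; I would package this tail into an explicit twist by the determinant character (which is the main content of the construction of $e$) and reduce to the previous case tensored with a known finite-type Picard element, again obtaining finite type.

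For the converse, given a coefficient sequence with both infinitely many zero and infinitely many nonzero entries, the plan is to realise $X$ as the homotopy limit of a tower $X_N$ of approximations whose $N$th term matches the expansion of $e(X)$ through coefficient $a_N$, and to examine the Milnor sequence
\[
0 \to \limone_N \pi_{k+1} X_N \to \pi_k X \to \lim_N \pi_k X_N \to 0.
\]
Each transition $X_N \to X_{N+1}$ is an equivalence if $a_{N+1}=0$ and contributes a nontrivial bounded-order torsion class otherwise; the infinite alternation between these two situations should produce a $\limone$-term of infinite torsion cardinality concentrated in a fixed degree, yielding a non-finitely-generated $\pi_k X$.

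The principal obstacle will be this last step: precisely identifying the degree in which $\limone$ blows up and ruling out that the resulting class is killed by a differential or absorbed by a hidden extension in the twisted descent spectral sequence. This amounts to a sharpened and coefficient-wise version of the Hovey--Strickland non-finite-type argument (\cite[Proposition~15.7]{MR1601906}), detecting precisely the two tame regimes and nothing more.
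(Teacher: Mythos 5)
Your plan has two genuine gaps, one in each direction. For the ``tame'' cases, knowing that $e(X)$ is an integer $n$ does not identify $X$ with $\Sigma^{2n}L_{K(2)}S^0$: in the exact sequence (\ref{equation:ses}) the kernel of $e$ is a copy of $\Z_p$ generated by $S^{-2\lambda}S[det]$ (equivalently by $\zeta$), not a torsion factor, so two Picard elements with the same $e$-invariant can be integrally distinct and you cannot simply ``invoke the known finite generation of $\pi_*S_{K(2)}$.'' The paper gets around this by reducing mod $p$ first: Proposition \ref{fg} shows $X$ is of finite type iff $\pi_*X/p$ is degreewise finite, and Theorem \ref{thm:modulep} (resting on the mod-$p$ equivalence $S^{2\lambda}/p\simeq S[det]/p$ of Theorem \ref{thm:relation}) shows $X/p\simeq S^{2e(X)}/p$, so finite type depends only on $e(X)$. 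The same issue undermines your ``finitely many zeros'' case: the determinant twist is an $\R$-suspension only after smashing with $S^0/p$, and the finite-type property of the ``known finite-type Picard element'' you want to tensor with (e.g.\ $S^{\beta}[det]$ for a general $p$-adic $\beta$, or $I_2\simeq\Sigma^{2}S[det]$) is not known beforehand --- it is part of what the theorem proves (the paper only deduces it in Theorem \ref{thm:ghdual}).

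For the converse, the proposed mechanism fails. In the construction $S^{2\alpha}=\holim_k\Sigma^{2\alpha_k}S^0/(p^{k+1},v_1^{p^k})$ every term has degreewise finite homotopy groups, so the towers are Mittag--Leffler and the $\limone$ term vanishes; the paper uses exactly this to identify $\pi_nS^{2\alpha}/p$ with $\lim_k\pi_n\Sigma^{2\alpha_k}S^0/(p,v_1^{p^k})$. Hence non-finite generation cannot come from a $\limone$ blow-up; it comes from the inverse limit itself containing infinitely many classes in a single bidegree, namely infinitely many $v_1$-towers in the Shimomura--Yabe/Behrens presentation of $H^*(\mathbb{G}_2,E_*/(p,v_1^{p^k}))$ that stabilize under the transition maps. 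The bookkeeping of which towers stabilize (Lemma \ref{stable}, Theorem \ref{case1} and its analogues for the $g_1$ and $h_0g_1$ rows) is precisely where the condition ``infinitely many zero and infinitely many nonzero coefficients'' enters: roughly, each occurrence of a zero coefficient followed by a nonzero one contributes a new stable class in the same stem. Your claim that the transition $X_N\to X_{N+1}$ is an equivalence when $a_{N+1}=0$ is also false: the maps are quotients $S^0/(p^{k+2},v_1^{p^{k+1}})\to S^0/(p^{k+1},v_1^{p^k})$ composed with $v_2$-power self-maps, and the quotient is never an equivalence. So the quantitative $v_1$-Bockstein analysis, which is the heart of the theorem, is missing from your outline.
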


\begin{rem}
Note that $e(X)$ contains only finitely many nonzero entries if and only if $e(X)$ is a nonnegative integer.
\end{rem}
Theorem \ref{thm:mainresult} gives a refinement to the following theorem due to Hovey and Strickland.

\begin{thm}{\cite[Theorem~15.1]{MR1601906}}
Let $\mu$ be the unique translation-invariant measure on $\R$ with $\mu(\R)=1$.  We call $\mu$ the Haar measure on $Pic_{K(2)}^0$.  Then the set 
$$\{X \in \R \subset Pic_{K(2)}^0 \mid \text{$X$ is of finite type}\}$$
has Haar measure $0$ in $Pic_{K(2)}^0$.
\end{thm}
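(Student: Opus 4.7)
The plan is to deduce the statement from Theorem~\ref{thm:mainresult}. That theorem characterizes the finite-type elements of $\R \subset Pic_{K(2)}^0$ as exactly those $\alpha$ whose $\R$-expansion coefficients $\{a_i\}$ contain either only finitely many zero entries or only finitely many nonzero entries. Hence the problem reduces to showing that both of these subsets of $\R$ have Haar measure zero.

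First I would identify $(\R, \mu)$ as a product measure space. The $\R$-expansion gives a bijection
\[
\R \,\cong\, \{0, \ldots, p^2-2\} \times \prod_{i \geq 1}\{0, \ldots, p-1\},
\]
and the product of the normalized counting measures on the factors is a translation-invariant probability measure on the left-hand side; by the uniqueness clause in the definition of $\mu$, the two measures must coincide. Under this identification the coordinate functions $a_i$ are independent, with each $a_i$ for $i \geq 1$ uniformly distributed on $\{0, \ldots, p-1\}$.

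With this in place the measure computation is routine. The set $A$ of $\alpha$ whose expansion has only finitely many nonzero entries is precisely the image of $\Z_{\geq 0}$ under the expansion, hence countable, so $\mu(A) = 0$. The set $B$ of $\alpha$ whose expansion has only finitely many zero entries decomposes as $\bigcup_{N \geq 0} B_N$, where $B_N = \{\alpha \mid a_i(\alpha) \neq 0 \text{ for all } i > N\}$; by independence, $\mu(B_N) \leq \prod_{i > N}(p-1)/p = 0$, so $\mu(B) = 0$ as well. Thus the finite-type set $A \cup B$ has Haar measure zero.

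The main obstacle here is not the measure-theoretic step, which is a standard Borel--Cantelli-style computation, but rather Theorem~\ref{thm:mainresult} together with the explicit description of $e$ from Corollary~\ref{cor:e}: these supply the exact combinatorial condition for finite type that makes the reduction to the product-measure picture possible. Once those inputs are available, the Hovey--Strickland measure-zero statement follows at once.
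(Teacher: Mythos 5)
Your argument is correct, but it is not the paper's argument: the paper does not prove this statement at all, it simply cites Hovey--Strickland \cite[Theorem~15.1]{MR1601906}, whose original proof is independent of any classification of the finite-type elements (and, as the introduction notes, is non-constructive). What you do instead is derive the measure-zero statement as a corollary of the paper's main classification (Theorem \ref{result}, via Corollary \ref{cor:e}, so that for $X=S^{2\alpha}$ with $\alpha\in\R$ one has $e(X)=\alpha$), reducing everything to the observation that the set of digit sequences with finitely many nonzero entries is countable and the set with finitely many zero entries is a countable union of sets of measure $\prod_{i>N}(p-1)/p=0$. This is not circular, since the proof of Theorem \ref{result} nowhere uses the Hovey--Strickland theorem, and it is exactly the sense in which the paper calls its main theorem a ``refinement'': your route makes the measure-zero statement transparent and even identifies the exceptional set precisely, but it costs the full strength of the Shimomura--Yabe computations underlying Theorem \ref{result}, whereas Hovey--Strickland obtain measure zero much more cheaply without knowing which elements are of finite type. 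One small point to tighten: translation invariance of the transported product measure is not immediate from the digit bijection (the group law on $\R$ carries between the $a_0$-coordinate and the higher digits); the clean justification is that each cylinder set $\{a_0=c_0,\dots,a_k=c_k\}$ is a coset of the open subgroup $p^k(p^2-1)\R$ of index $p^k(p^2-1)$, so the pushforward of Haar measure is the product of the uniform measures, and then your Borel--Cantelli-style computation goes through verbatim.
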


We will say $\alpha \in \R$ has the finiteness property if $\alpha$ satisfies the condition in Theorem \ref{thm:mainresult}; that is, there are either finitely many zero entries or finitely many nonzero entries in $\alpha$'s $\R$-expansion coefficients. 

The $\R$-index $e(X)$ has the following beneficial properties.  Let $X$ be a spectrum.  We denote the homotopy cofiber of $X\xrightarrow{p}X$ by $X/p$.

\begin{thm*}[Theorem \ref{thm:modulep}]
Let $X$, $Y$ be elements in $Pic_{K(2)}^0$. Then $X/p \simeq Y /p$ if and only if $e(X)=e(Y)$.
\end{thm*}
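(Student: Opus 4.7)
The plan is to use the group structure on $Pic_{K(2)}^0$ to reduce to the case $Y = S^0$. Since $e$ is a homomorphism and $(-)/p$ distributes over the smash product, $X/p \simeq Y/p$ is equivalent to $(X \otimes Y^{-1})/p \simeq S^0/p$, and $e(X) = e(Y)$ is equivalent to $e(X \otimes Y^{-1}) = 0$. It therefore suffices to prove that for $Z \in Pic_{K(2)}^0$, $Z/p \simeq S^0/p$ if and only if $e(Z) = 0$.

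The forward direction will be formal: given $Z/p \simeq S^0/p$, I would apply the Morava $E$-theory homology functor to obtain an isomorphism of Morava modules $(E_2)_*(Z/p) \cong (E_2)_*/p$. By construction, $e(Z)$ is represented by the cocycle $\bar{c}_Z \colon \mathbb{G}_2 \to (E_2/p)^\times_0$ that describes the $\mathbb{G}_2$-action on the free rank-one module $(E_2)_*(Z/p)$, so the existence of this Morava module isomorphism forces $\bar{c}_Z$ to be a coboundary, i.e., $e(Z) = 0$.

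For the converse, if $e(Z) = 0$ then $\bar{c}_Z$ is a coboundary, so there is a unit $\alpha \in (E_2/p)^\times_0$ with $\bar{c}_Z(g) = g(\alpha) \alpha^{-1}$ for all $g \in \mathbb{G}_2$. Multiplication by $\alpha$ provides a $\mathbb{G}_2$-equivariant isomorphism $(E_2)_*(Z/p) \cong (E_2)_*/p$ of Morava modules. To lift this to a $K(2)$-local equivalence, I would apply Devinatz--Hopkins descent: $Z/p$ is recovered as $(E_2 \wedge Z/p)^{h\mathbb{G}_2}$, where $E_2 \wedge Z/p$ is a $\mathbb{G}_2$-equivariant $E_2/p$-module spectrum. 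The algebraic equivariant isomorphism should promote to an equivariant equivalence $E_2 \wedge Z/p \simeq E_2/p$ of $E_2/p$-module spectra; taking $\mathbb{G}_2$-homotopy fixed points then yields $Z/p \simeq S^0/p$.

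The main obstacle is precisely this last lifting step: promoting an algebraic $\mathbb{G}_2$-equivariant Morava module isomorphism to a spectrum-level $\mathbb{G}_2$-equivariant equivalence of $E_2/p$-module spectra. I expect this to follow from the rigidity of free rank-one $E_2/p$-module spectra with continuous $\mathbb{G}_2$-action: such a module spectrum is determined by its generator and the cocycle describing the $\mathbb{G}_2$-action, and the coboundary identity for $\alpha$ is precisely the condition needed for the multiplication-by-$\alpha$ map to be $\mathbb{G}_2$-equivariant at the spectrum level.
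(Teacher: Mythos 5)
Your reduction to the case $Y=S^0$ and your forward direction are fine: smashing with $Y^{-1}$ is harmless, and applying $(E_2)_*$-homology to an equivalence $Z/p\simeq S^0/p$ does show that the mod-$p$ cocycle of $Z$ is a coboundary, hence $e(Z)=0$. (This in fact cleanly justifies the ``only if'' half, which the paper's own two-line proof leaves implicit.) The genuine gap is exactly where you flag it, and it cannot be waved through as ``rigidity'': the claim that a free rank-one $E_2/p$-module spectrum with continuous $\mathbb{G}_2$-action is determined up to $\mathbb{G}_2$-equivariant equivalence by its degree-zero cocycle is not a formal fact. A cocycle identity on $\pi_0$ makes multiplication by $\alpha$ equivariant only up to homotopy; to take homotopy fixed points you need coherent equivariance, and the potential failure is measured by higher-filtration classes in the descent (Picard) spectral sequence, i.e.\ by groups of the form $H^s(\mathbb{G}_2,(E_2/p)_{s-1})$ for $s\geqslant 2$ --- precisely the mechanism that produces exotic Picard elements at small primes. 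Note also that your rigidity claim would immediately reprove Theorem~\ref{thm:relation} ($S^{2\lambda}/p\simeq S[det]/p$), since $S^{2\lambda}$ and $S[det]$ have isomorphic mod-$p$ Morava modules; that equivalence is the nontrivial topological input (due to Hopkins--Karamanov) that the paper takes as given, which is a strong sign that your last step is the real content and needs an argument, not an expectation.

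To close the gap you have two options. Either carry out the obstruction-theoretic argument: at $p\geqslant 5$, height $2$, the groups $H^s(\mathbb{G}_2,(E_2/p)_t)$ vanish unless $2(p-1)\mid t$, and vanish for $s$ above the (finite) cohomological dimension, so the relevant obstruction groups $H^s(\mathbb{G}_2,(E_2/p)_{s-1})$, $s\geqslant 2$, are zero and the algebraic isomorphism lifts; this is a legitimate but genuinely different route from the paper, essentially proving a mod-$p$ algebraicity statement. Or argue as the paper does: write $X=S^{2\alpha}\wedge S^{\beta}[det]$, use Theorem~\ref{thm:relation} together with Corollary~\ref{cor:e} (equivalently, use the exact sequence \ref{equation:ses} to see that $e(Z)=0$ forces $Z\simeq S^{-2\lambda c}\wedge S^{c}[det]$ for some $c\in\Zp$) to conclude $X/p\simeq S^{2e(X)}/p$, from which the theorem follows. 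As written, your proposal is incomplete at the lifting step.
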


\begin{thm*}[Theorem \ref{thm:ghdual}]\label{thm:dual}
Let $X$ be an element in $Pic_{K(2)}^0$, $I_2X$ be the Gross--Hopkins dual of $X$  (see Definition \ref{defn:GrossHopkins}), and $\lambda=\displaystyle\lim_k p^{2k}(p+1)\in \R$. Then $e(I_2X)=1+\lambda-e(X)$.  In particular, $X$ is of finite type if and only if $I_2X$ is of finite type.
\end{thm*}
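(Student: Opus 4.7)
The plan is to reduce the formula $e(I_2X) = 1 + \lambda - e(X)$ to the single identity $e(I_2 S_{K(2)}) = 1 + \lambda$, and then read off the finite type statement from \Cref{result} by checking that the finiteness property on $\Ring$ is preserved under the involution $\alpha \mapsto 1 + \lambda - \alpha$.

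For the reduction, Gross--Hopkins duality is representable, so $I_2 X \simeq F(X, I_2 S_{K(2)})$; for invertible $X \in Pic_{K(2)}^0$ this simplifies to $I_2 X \simeq X^{-1} \wedge I_2 S_{K(2)}$ in the Picard group. Since $e$ is a group homomorphism $Pic_{K(2)}^0 \to \Ring$ by Corollary \ref{cor:e}, one gets
\[
e(I_2 X) = e(I_2 S_{K(2)}) - e(X),
\]
and the theorem reduces to showing $e(I_2 S_{K(2)}) = 1 + \lambda$.

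For this computation, I would invoke the Gross--Hopkins identification of $I_n S_{K(n)}$ as a Picard element built from the determinant character $\det \colon \mathbb{G}_n \to \Zp^\times$ together with a Spanier--Whitehead shift. At $n = 2$ and $p \geqslant 5$, this describes $I_2 S_{K(2)}$ explicitly in the Picard group. Passing through the cocycle description of $e$ from Corollary \ref{cor:e}, the $\det$-factor contributes $\lambda = (0, p+1)$ and the shift contributes $1 = (1,1)$ under the splitting $\Ring \cong \Zp \times \Z/(p^2-1)$, summing to $1 + \lambda = (1, p+2)$. Carefully matching the normalizations between the Gross--Hopkins Picard presentation and the $\Ring$-valued invariant $e$ is the main technical obstacle in the argument.

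For the ``in particular'' clause, by \Cref{result} it suffices to show that the finiteness property on $\Ring$ is preserved under $\alpha \mapsto 1 + \lambda - \alpha$. Writing $\alpha = a_0 + (p^2-1)\beta_\alpha$ with $\beta_\alpha \in \Zp$, a direct computation yields $\beta_{1+\lambda-\alpha} = -\beta_\alpha + C$, where $C$ is $-1/(p-1) = 1 + p + p^2 + \cdots$ (if $a_0 \leqslant p+2$) or $-p/(p-1) = p + p^2 + \cdots$ (otherwise); both constants have $p$-adic digits eventually equal to $1$. A short case analysis then finishes: if $\beta_\alpha$ has finitely many nonzero digits, then $-\beta_\alpha$ has digits eventually equal to $p-1$, and the carries produced by adding $C$ propagate indefinitely, giving $\beta_{1+\lambda-\alpha}$ digits eventually equal to $1$; if $\beta_\alpha$ has finitely many zero digits, then $-\beta_\alpha$ has digits eventually in $\{0, \dots, p-2\}$, and adding $C$ yields digits $p - a_i$ or $p+1 - a_i$ (with or without incoming carry), which lie in $\{1, \dots, p-1\}$. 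In both cases $\beta_{1+\lambda-\alpha}$ has the finiteness property, so by \Cref{result} $X$ is of finite type if and only if $I_2 X$ is.
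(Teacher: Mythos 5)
Your overall route is the same as the paper's: you factor $I_2X \simeq D_2X \wedge I_2$, use that $e$ is a homomorphism so $e(I_2X)=e(I_2)-e(X)$, identify $I_2\simeq \Sigma^{2}S[det]$ and evaluate $e$ on the two factors ($\lambda$ for $S[det]$ via Theorem \ref{thm:relation} and Corollary \ref{cor:e}, and $1$ for the shift since $e(S^{2\alpha})=\alpha$), and then reduce the ``in particular'' clause to the elementary claim that the involution $\alpha \mapsto 1+\lambda-\alpha$ of $\R$ preserves the finiteness property; this is exactly the skeleton of the paper's proof, which likewise proves only one implication and appeals to $I_2(I_2X)\simeq X$. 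Your normalization (the suspension contributes $1$, not $2$) is the one consistent with Corollary \ref{cor:e} and with the statement as quoted, and your identity $\beta_{1+\lambda-\alpha}=-\beta_\alpha+C$ with the two values of $C$ according to whether $a_0\leqslant p+2$ checks out.

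The one step that is not correct as written is the second case of your carry analysis. When $\beta_\alpha$ has finitely many zero digits, the eventual digits of $-\beta_\alpha$ are $p-1-a_i\in\{0,\dots,p-2\}$, and adding $C$ gives digit $p-a_i$ (no incoming carry) or $p+1-a_i$ (incoming carry); the latter equals $p$ when $a_i=1$, so the digit produced is $0$ and the carry propagates, contradicting your claim that all these digits lie in $\{1,\dots,p-1\}$. A concrete instance is $\alpha=\lambda$: there $1+\lambda-\alpha=1$, whose digits are eventually $0$, not eventually nonzero. The conclusion survives, but you need one more observation: once the incoming carry is $0$ it stays $0$ (the digit sums are at most $p-1$), and while it is $1$ it persists exactly through positions with $a_i=1$, producing zeros there. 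Hence either the carry eventually dies, after which every digit is $p-a_i\geqslant 1$ and $\beta_{1+\lambda-\alpha}$ has finitely many zero digits, or $a_i=1$ for all large $i$ and $\beta_{1+\lambda-\alpha}$ has finitely many nonzero digits; in both situations the finiteness property holds, so \Cref{result} applies. With that patch your argument is complete; the paper handles this case instead by contradiction, locating infinitely many positions of $e(I_2X)$ where a nonzero digit is followed by a zero digit and showing each such position forces a zero digit of $e(X)$.
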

\begin{rem}
If $e(X)$ is an integer, then $e(I_2X)$ will not be an integer and its $\R$-expansion coefficients contain only finitely many zero entries.

\end{rem}

The paper is organized as follows: in section \ref{picardgroup}, we first review some facts about $Pic_{K(2)}^0$ and then give concrete constructions of some elements in it.  In section \ref{reducesection}, we reduce the problem of finite type to those elements with concrete constructions.  In section \ref{computationsection}, we prove our result by computations based on the constructions and the known computation of $\pi_*L_{K(2)}S^0$.  In section \ref{examplesection}, we apply our result to three interesting examples $S^0$, $I_2$, and $S^{2\gamma}$ (to be defined later). 

Through out this paper, $p \geqslant 5$ is a fixed prime and all spectra are $K(2)$-local and group cohomology of $\mathbb{G}_n$ are continuous cohomology.  We may omit $L_{K(2)}$; that is, when we write $S^0$, it means $L_{K(2)} S^0$.

\subsection*{Acknowledgments}
I would like to heartily thank Paul Goerss for many helpful conversations and the feedback on early drafts of this paper.  I would like to thank Tobias Barthel for explaining how the chromatic splitting conjecture implies that the $K(n)$-local sphere is of finite type.

\section{The Picard groups of $K(2)$-local categories at prime $p \geqslant 5$}
\label{picardgroup}
The Picard groups of $K(n)$-local categories is introduced by Hopkins (\cite{MR1232198}; see also \cite{MR1263713}). 
\begin{defn}{\cite[Definition 1.2]{MR1263713}}
A $K(n)$-local spectrum $Z$ is \emph{invertible in the $K(n)$-local category} if and only if there is a spectrum $W$ such that
$$L_{K(n)}(Z \wedge W)= L_{K(n)} S^0.$$
The Picard group of $K(n)$-local category $Pic_{K(n)}$ is the group of isomorphism classes of such spectra, with multiplication given by
$$(X, Y) \rightarrow L_{K(n)}(X \wedge Y).$$
\end{defn}
The Picard group of $K(2)$-local categories at prime $p \geqslant 5$ has been computed by Olivier Lader in his thesis, who attributes the result to Hopkins and Karamanov. 	

\begin{thm}{\cite[Theorem ~ 5.3]{Lader}}
		At height $2$, prime $p \geqslant 5$, the Picard group of $K(2)$-local category $Pic_{K(2)}$ is isomorphic to $\Z_p \oplus \Z_p \oplus \Z/ 2(p^2-1)$.
\end{thm}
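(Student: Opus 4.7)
The plan is to compute $Pic_{K(2)}$ via the Picard descent spectral sequence associated to the $\G$-Galois extension $L_{K(2)}S^0\to E_2$ of Devinatz--Hopkins, in the form developed by Mathew--Stojanoska. It has the shape
\[
E_2^{s,t}=H^s\bigl(\G,\pi_t\,\mathfrak{pic}(E_2)\bigr)\Longrightarrow \pi_{t-s}\,\mathfrak{pic}_{K(2)}(S^0),
\]
with $Pic_{K(2)}=\pi_0\,\mathfrak{pic}_{K(2)}(S^0)$ read off the antidiagonal $s=t$. The coefficient homotopy is $\pi_0\,\mathfrak{pic}(E_2)=Pic(E_2)=\Z/2$ (grading parity, via $2$-periodicity of $E_2$), $\pi_1\,\mathfrak{pic}(E_2)=(E_2)_0^\times$, and $\pi_n\,\mathfrak{pic}(E_2)\cong \pi_{n-1}E_2$ for $n\geq 2$.

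First I would identify the algebraic Picard group from the two lowest antidiagonal positions $(0,0)$ and $(1,1)$. The $(0,0)$ entry $H^0(\G,\Z/2)=\Z/2$ records suspension parity. For $H^1(\G,(E_2)_0^\times)$ at $(1,1)$, I would exploit the filtration
\[
1\to 1+\mathfrak{m}\to (E_2)_0^\times\to \mathbb{F}_{p^2}^\times\to 1
\]
of the units of the Lubin--Tate ring $(E_2)_0=W(\mathbb{F}_{p^2})[[u_1]]$ (with $\mathfrak{m}$ its maximal ideal) and its long exact sequence in continuous $\G$-cohomology. The residue piece contributes the torsion factor $\Z/(p^2-1)$, while the pro-$p$ part $1+\mathfrak{m}$, after a logarithm, reduces to a computation involving the reduced determinant $\G\twoheadrightarrow \Zp^\times$ and contributes a single $\Zp$. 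Together with the $\Z/2$ suspension, plus a short extension-class computation merging the $\Z/2$ with the $\Z/(p^2-1)$ into $\Z/2(p^2-1)$, this yields the algebraic summand $\Zp\oplus \Z/2(p^2-1)$.

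Second, I would compute the exotic contribution from the higher antidiagonal entries $H^s(\G,\pi_{s-1}E_2)$, $s\geq 2$, and control the Picard differentials on that line. For $p\geq 5$, the Morava stabilizer group $\mathbb{S}_2$ has a pro-$p$ open subgroup of finite index that is a Poincar\'e duality group of cohomological dimension $4$, and the cohomology $H^*(\G,(E_2)_*)$ underlying $\pi_*L_{K(2)}S^0$ is explicitly known. A careful bookkeeping of the surviving classes on the line $s=t\geq 2$ yields exactly one additional $\Zp$. The main obstacle is this exotic step: isolating a single $\Zp$ among the higher cohomology and ruling out potential Picard differentials into it. The condition $p\geq 5$ is essential here, through the sparsity of $H^*(\G,-)$ in low weights. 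Combining the algebraic piece with the exotic $\Zp$ yields the stated isomorphism $Pic_{K(2)}\cong \Zp\oplus \Zp\oplus \Z/2(p^2-1)$.
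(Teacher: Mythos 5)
Your overall strategy (Picard descent along $L_{K(2)}S^0\to E_2$ in the style of Mathew--Stojanoska) is a legitimate route, but your accounting of where the two $\Zp$-summands come from is wrong in a way that breaks the argument. The algebraic part is miscounted: $H^1(\mathbb{G}_2,(E_2)_0^\times)$ has $\Zp$-rank $2$, not $1$. The unit filtration you invoke does not collapse the pro-$p$ part to a single $\Zp$ governed by the determinant; rather, as in the exact sequence \eqref{equation:ses} of this paper, one has $0\to\Zp\to H^1(\mathbb{G}_2,(E_2)_0^\times)\xrightarrow{e}H^1(\mathbb{G}_2,(E_2/p)_0^\times)\cong\R\to 0$, where the kernel $\Zp$ is the image of $H^1(\mathbb{G}_2,E_0)\cong\Zp\cdot\zeta$ under $exp(p-)$ (Proposition \ref{prop:alg}), and the quotient $\R=\lim_k\Z/p^k(p^2-1)\cong\Zp\oplus\Z/(p^2-1)$ already contains a second $\Zp$ coming from the pro-$p$ units $1+(u_1)$ of $(E_2/p)_0=\mathbb{F}_{p^2}[[u_1]]$; this sequence splits via $\alpha\mapsto S^{2\alpha}$. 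So both $\Zp$-factors live in filtration $\leqslant 1$, and together with the $\Z/2$ of suspension parity (and the nonsplit extension with $\Z/(p^2-1)$) the algebraic Picard group alone is already $\Zp\oplus\Zp\oplus\Z/2(p^2-1)$.

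Consequently your ``exotic $\Zp$'' does not exist, and the claim that careful bookkeeping on the line $s=t\geqslant 2$ yields one additional $\Zp$ is false. For $p\geqslant 5$ the sparsity $H^s(\mathbb{G}_2,E_t)=0$ unless $2(p-1)\mid t$, together with $\pi_{odd}E_2=0$ and the finite cohomological dimension of $\mathbb{G}_2$ ($n^2+1=5$), kills every entry $H^s(\mathbb{G}_2,\pi_{s-1}E_2)$ with $s\geqslant 2$ on the relevant antidiagonal, since the internal degrees $s-1\leqslant 4$ are never positive multiples of $2(p-1)\geqslant 8$. The exotic Picard group is therefore trivial at $p\geqslant 5$; indeed this vanishing is exactly what licenses the identification $Pic^0_{K(2)}=H^1(\mathbb{G}_2,(E_2)_0^\times)$ used throughout the paper, and it would be contradicted if a higher-filtration $\Zp$ survived. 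Note also that the paper does not prove this theorem itself: it quotes Lader's thesis (attributing the result to Hopkins and Karamanov), where the computation proceeds along the algebraic lines just described, so the fix for your proposal is to repair the rank count in filtration $1$ and to prove vanishing, rather than a contribution, in filtrations $\geqslant 2$.
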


We explore the algebraic structure of $Pic^0_{K(2)}$ (see \cite{Goerss} and \cite{MR1601906}).  Most parts work for general heights $n$ but we focus on the height $2$ case here. Recall that $\mathfrak{Z}$ denotes $\displaystyle\lim_k \Z/p^k(p^2-1).$
The group $Pic^0_{K(2)}$ is a continuous $\R$-module.  In particular, given $X \in Pic^0_{K(2)}$ and $\alpha \in \R$, there is an element $X^{\alpha} \in Pic^0_{K(2)}$.

The power series $exp(p-)=\displaystyle\sum^{\infty}_{n=0}p^nx^n/n!$ converges for $p>2$.  We have a short exact sequence
$$0\xrightarrow{\hspace*{1cm}} E_0 \xrightarrow{exp(p-)} E_0^\times \xrightarrow{quotient} (E_0/p)^\times \xrightarrow{\hspace*{1cm}} 1.$$
This gives a long exact sequence
\begin{equation}{\label{equation:les}}
\cdots \rightarrow H^1(\mathbb{G}_2, E_0) \xrightarrow{exp(p-)} H^1(\mathbb{G}_2, E_0^\times) \xrightarrow{e} H^1(\mathbb{G}_2, (E_0/p)^\times) \longrightarrow \cdots.
\end{equation}
At $p \geqslant 5$, height $n=2$, we have
$$Pic^0_{K(2)}=H^1(\mathbb{G}_2, (E_2)_0^\times).$$
The quotient map
$$(E_2)_0^\times \rightarrow (E_2/p)_0^\times$$
induces the map
$$e \colon Pic_{K(2)}^0 \cong H^1(\mathbb{G}_2, (E_2)_0^\times) \rightarrow H^1(\mathbb{G}_2, (E_2/p)_0^\times) \cong \mathfrak{Z}.$$
It turns out that the part in \ref{equation:les} becomes a short exact sequence.  We can rewrite it as
\begin{equation}\label{equation:ses}
0 \rightarrow \Z_p \rightarrow Pic^0_{K(2)} \xrightarrow{e} \mathfrak{Z} \rightarrow 0
\end{equation}
This is an exact sequence of continuous $\R$-modules.  The first term $\Zp$ is generated by an element $\zeta \in H^1(\mathbb{G}_2, E_0)$ defined as follows.
\begin{defn}{\cite[Section 1.3]{MR2183282}}\label{definition:zeta}
The homomorphism $\zeta$ is the composition
$$\mathbb{G}_2 \cong \mathbb{S}_2 \rtimes \Gal(\mathbb{F}_{p^2}/\mathbb{F}_p) \xrightarrow{(det, 0)} \mathbb{Z}_p^\times \cong \mathbb{Z}_p.$$
\end{defn}
\begin{rem}
We actually define an element $\zeta$ in $H^1(\mathbb{G}_2, \mathbb{Z}_p)$.  We will denote its image in $H^1(\mathbb{G}_2, E_0)$ also by $\zeta$.  Note that $g'(0)$ originally lives in $\mathbb{F}_{p}^\times$ and we denote its Teichmuller lift to $\mathbb{Z}_p^\times$ also by $g'(0)$.  Then 
by choosing the isomorphism
\begin{align*}
\mathbb{Z}_p^\times &\cong \mathbb{Z}_p \\
1+px &\rightarrow \frac{1}{p}log(1+py),
\end{align*}
a concrete formula of $\zeta \in H^{1,0}(\mathbb{G}_2,(E_2)_0)$ is
\begin{align}\label{equation:zeta}
\mathbb{G}_2 &\rightarrow \mathbb{Z}_p \subset E_0\\ \notag
g &\rightarrow \frac{1}{p}log(g'(0)^{-(p+1)}det(g)).
\end{align}
\end{rem}
There is a splitting map
\begin{align*}
\R &\hookrightarrow Pic^0_{K(2)} \\
\alpha &\rightarrow S^{2\alpha}.
\end{align*}
We shall see a concrete construction of $S^{2\alpha}$ in the Section \ref{subsec:alpha} due to \cite{MR1263713}.  As a continuous $\R$-module, $Pic^0_{K(2)}$ is generated by two topological generators $S^2$ and $S[det]$ with one relation.  For the definition of the generator $S[det]$, see \cite{BBGS}.  The map $det \colon \mathbb{G}_n \rightarrow \Z_p^\times$ is defined in \cite[Section~~ 1.3]{MR2183282}, see also \cite[Section 3]{BBGS}.  The generator $S^2$ can also be realized as a crossed homomorphism $t_0 \colon \mathbb{G}_2 \rightarrow E_0^\times$.  We denote $(S[det])^{\beta}$ by $S^\beta[det]$ for $\beta \in \R$.  Now each element $X\in Pic^0_{K(2)}$ can be presented as $S^{2\alpha} \wedge S^\beta [det]$ where $\alpha,\,\beta \in \R$.  To describe the relation, we introduce $\gamma = \displaystyle\lim_k p^{2k} \in \R$, a generator of the torsion part $\Z/(p^2-1)$ in $\R$, and $\lambda = \displaystyle\lim_k p^{2k}(p+1) \in \R$.  The relation is
\begin{equation}\label{eq:relation}
S^{2\gamma \lambda} = S^\gamma[det].
\end{equation}
The relation follows from knowing the image of $\zeta$ in the short exact sequence \ref{equation:ses}.  This is explained in Proposition \ref{prop:alg}.
\begin{prop}{\cite[Proposition~3.9]{Goerss}}\label{prop:alg}
The image of $\zeta$ is
$$exp(p\zeta)=t_0^{-\lambda}det.$$
\end{prop}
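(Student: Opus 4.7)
The plan is to verify the identity $exp(p\zeta) = t_0^{-\lambda} det$ directly at the level of crossed homomorphisms $\mathbb{G}_2 \to E_0^\times$, by plugging in the explicit formula (\ref{equation:zeta}) for $\zeta$ and unfolding the $\R$-module structure on $E_0^\times$ carried by $\lambda$.

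First, apply $exp(p-)$ to the explicit formula. Since $exp$ and $\log$ are mutually inverse on $1 + p\Zp$, the formula $\zeta(g) = \frac{1}{p}\log(g'(0)^{-(p+1)} det(g))$ immediately yields
$$exp(p\zeta)(g) = g'(0)^{-(p+1)} det(g).$$
One should check that the argument indeed lies in $1 + p\Zp$ so that $\log$ and $exp$ both converge and cancel; this reduces to the mod-$p$ identity $det(g) \equiv g'(0)^{p+1} \pmod p$, which is the standard fact that the determinant of $g$ on the Dieudonn\'e module modulo $p$ equals the Frobenius-norm $N_{\mathbb{F}_{p^2}/\mathbb{F}_p}(g'(0)) = g'(0)^{p+1}$ of the leading coefficient.

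Second, compute $t_0^{-\lambda}(g) det(g)$ using the splitting $E_0^\times \cong \mu_{p^2-1} \times (1+\mathfrak{m})$ (with $\mathfrak{m}$ the maximal ideal of $E_0$), where $\R \cong \Zp \oplus \Z/(p^2-1)$ acts factor-wise: $\Zp$ on $1+\mathfrak{m}$ and $\Z/(p^2-1)$ on $\mu_{p^2-1}$. Since $\lambda = \lim_k p^{2k}(p+1)$ has $\Zp$-component $0$ and $\Z/(p^2-1)$-component $p+1$, raising $y \in E_0^\times$ to the $\lambda$ extracts its Teichmuller reduction $\omega(y) \in \mu_{p^2-1}$ and takes its $(p+1)$-st power. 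Applied to $y = t_0(g)$, and using that $t_0(g) \equiv g'(0) \pmod{\mathfrak{m}}$ because $g'(0)$ is literally the leading coefficient of the action of $g$ on the Lubin--Tate coordinate realizing $t_0$, we obtain $t_0^\lambda(g) = g'(0)^{p+1}$ and hence $t_0^{-\lambda}(g) det(g) = g'(0)^{-(p+1)} det(g)$. This matches the expression for $exp(p\zeta)(g)$ from the first step, proving the proposition.

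The main obstacle is bookkeeping of conventions: one must simultaneously justify $t_0(g) \equiv g'(0) \pmod{\mathfrak{m}}$ (which fixes the sign convention for the crossed homomorphism realizing $S^2$) and $det(g) \equiv g'(0)^{p+1} \pmod p$, and verify that these conventions line up so that the exponent $-\lambda$ (rather than $+\lambda$) appears in the final identity. Both are standard in the Goerss--Henn--Mahowald--Rezk framework but require unpacking the definitions of $det$ and the grading on $E_\ast$. Once the conventions are pinned down, the proposition is the one-line calculation above.
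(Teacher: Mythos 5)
Your proposal is correct and follows essentially the same route as the paper: both plug the explicit cocycle formula (\ref{equation:zeta}) into $exp(p-)$ to get $g\mapsto g'(0)^{-(p+1)}det(g)$, and both reduce the remaining step to $t_0(g)\equiv g'(0)$ mod $\mathfrak{m}$ together with the special shape of $\lambda=(p+1)\gamma$. The only (cosmetic) difference is that you extract the Teichm\"uller part via the splitting $E_0^\times\cong\mu_{p^2-1}\times(1+\mathfrak{m})$ and the decomposition $\R\cong\Zp\oplus\Z/(p^2-1)$, whereas the paper runs the equivalent limit argument $t_0(g)^{p^{2k}}\equiv g'(0)^{p^{2k}}$ mod $\mathfrak{m}^{p^{2k}}$ to identify $t_0^{\gamma}$ with the Teichm\"uller lift $g'(0)$.
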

We copy the proof from \cite{Goerss} for completeness.
\begin{proof}
We exam the diagram
\begin{center}
\begin{tikzcd}
  H^1(\mathbb{G}_2,\mathbb{Z}_p) \arrow[r, "exp(p-)"] \arrow[d]
    & H^1(\mathbb{G}_2,\mathbb{Z}_p^\times) \arrow[d] \\
  H^1(\mathbb{G}_2,E_0) \arrow[r, "exp(p-)"]
&H^1(\mathbb{G}_2,E_0^\times) \end{tikzcd}
\end{center}
Plugging in (\ref{equation:zeta}), we have $exp(p\zeta)$ as a crossed homomorphism
\begin{align*}
\mathbb{G}_2 &\rightarrow \mathbb{Z}_p^\times \subset E_0^\times \\
g &\rightarrow g'(0)^{-(p+1)}det(g).
\end{align*}

Note that $t_0(g)^\lambda=g'(0)$ mod $m$.  Let $\gamma$ be $\displaystyle\lim_k p^{2k}\in \R$.  Then $t_0(g)^{p^{2k}}=g'(0)^{p^{2k}}\text{ mod }m^{p^{2k}}$ and we have $t_0^\gamma=g'(0)^\gamma=g'(0)$.  Also note that $(p+1)\gamma=\lambda$, the image of $\zeta$ is 
$t_0^\lambda det(g)$.
\end{proof}

\begin{cor}\label{cor:e}
Let $X \in Pic_{K(n)}^0$ be $S^{2\alpha} \wedge S^\beta[det]$ where $\alpha, \, \beta \in \R$.  Then $e(X)=\alpha+\lambda \beta$.
\end{cor}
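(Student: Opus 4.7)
The plan is to exploit the fact that $e$ is a continuous $\R$-module homomorphism, so that it is enough to determine $e$ on the two topological generators $S^2$ and $S[det]$ of $Pic_{K(2)}^0$. Once $e(S^2)$ and $e(S[det])$ are known, $\R$-linearity yields
\[
e(S^{2\alpha} \wedge S^\beta[det]) = \alpha \cdot e(S^2) + \beta \cdot e(S[det]).
\]

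The value $e(S^2) = 1$ is essentially built in: the splitting $\R \hookrightarrow Pic_{K(2)}^0$, $\alpha \mapsto S^{2\alpha}$, is by construction a section of $e$, so $e(S^{2\alpha}) = \alpha$ for every $\alpha \in \R$, and specializing to $\alpha = 1$ gives $e(S^2) = 1$. The real content of the corollary is to compute $e(S[det])$, and this is where Proposition \ref{prop:alg} does all of the work. That proposition provides the identity $exp(p\zeta) = t_0^{-\lambda} \cdot det$ in $H^1(\mathbb{G}_2, E_0^\times) = Pic_{K(2)}^0$. Reading this as a multiplicative relation in the Picard group, and using that $t_0$ realizes $S^2$ while $det$ realizes $S[det]$, it becomes $X_\zeta = S^{-2\lambda} \wedge S[det]$, where $X_\zeta$ denotes the image of $\zeta \in H^1(\mathbb{G}_2, E_0) \cong \Zp$ under the map $exp(p-)$ appearing in (\ref{equation:les}). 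By the exactness of the sequence (\ref{equation:ses}), $X_\zeta$ lies in the kernel of $e$, so $e(X_\zeta) = 0$. Applying $e$ to the relation therefore gives $-\lambda + e(S[det]) = 0$, i.e.\ $e(S[det]) = \lambda$.

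Substituting these two computed values back into the displayed formula yields $e(S^{2\alpha} \wedge S^\beta[det]) = \alpha + \lambda\beta$, as claimed. The only genuine point to be careful about is the translation from the identity of crossed homomorphisms $exp(p\zeta) = t_0^{-\lambda} det$ to the corresponding multiplicative identity in $Pic_{K(2)}^0$: one needs the correspondences $t_0 \leftrightarrow S^2$ and $det \leftrightarrow S[det]$ to be consistent with the $\R$-module structure (so that, for instance, $t_0^{-\lambda}$ really does represent $S^{-2\lambda}$). Once that dictionary is in place, the argument is purely formal, reducing to the fact that $e$ is an $\R$-module map with a preferred section and that the generator of its kernel is known explicitly via $exp(p\zeta)$.
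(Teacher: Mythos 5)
Your proposal is correct and follows essentially the same route as the paper: both rest on Proposition \ref{prop:alg} together with the exactness of (\ref{equation:ses}) to identify $S^{-2\lambda}\wedge S[det]$ (the image of $\zeta$ under $exp(p-)$) as lying in, indeed generating, the kernel of $e$, and then use the splitting $e(S^{2\alpha})=\alpha$ and additivity of $e$ to conclude $e(S^{2\alpha}\wedge S^\beta[det])=\alpha+\lambda\beta$. Your explicit isolation of the values $e(S^2)=1$ and $e(S[det])=\lambda$ is just a slightly more spelled-out form of the paper's one-line computation.
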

\begin{proof}
By Proposition \ref{prop:alg} and the exactness of \ref{equation:ses}, the kernel of the map $e$ is generated by $S^{-2\lambda}S[det]$.  
Therefore, we have
$$e(S^{2\alpha}\wedge S^\beta[det])=e(S^{2\alpha})+e(S^{2\lambda\beta})=\alpha+\lambda\beta.$$
\end{proof}


\subsection*{Construction of $S^{2\alpha}$}\label{subsec:alpha}

This appeared in \cite{Goerss} and \cite{MR1263713}.  The construction works for all heights.  Here we focus on the height $2$ case.  There is a construction of $S^{2\alpha} \in Pic_{K(2)}^0$ for a given $\alpha \in \R$.

	We introduce some notation for the $\R$-expansion.
	\begin{defn}{\label{padic}}
	Let $\alpha \in \R$ with the expansion
	\[
	\alpha = a_0 + (p^2-1) \sum^{\infty}_{i=1} a_ip^{i-1} \,\,\, \mbox{for $a_i \in \Z$, \, $0 \leqslant a_0 < p^2-1$, \,$0 \leqslant a_i < p$}.
	\]
	Denote 
	\[
	\sum^{\infty}_{i=1} a_ip^{i-1} \in \mathbb{Z}_p
	\]
	by $\bar{\alpha}$.
	For $k \geqslant 0$, define $\alpha_k \in \Z$ to be
	\[
	\alpha_k = a_0 + (p^2-1)\sum^{k}_{i=1} a_i p^{i-1}
	\]
	and for $k \geqslant 1$, define $\bar{\alpha}_k \in \Z$ to be
	\[
	\bar{\alpha}_k = \sum^{k}_{i=1} a_i p^{i-1}.
	\]
	\end{defn}	
		
	We also need $v_1$-self maps in the construction.  A $v_1^{\ell}$-self map of $X$ is a map 
	$$f \colon\Sigma^{\ell|v_1|}X \rightarrow X$$
such that $K(1)_*(v_1^{\ell})$ is given by multiplying $v_1^{\ell}$ (see \cite{MR1192553}).  We work in large primes so $S^0/p$ has a unique $v_1$-self map. We will abuse notation and write $v_1^\ell$ for powers of this unique map.
	
	We will construct $S^{2\alpha}$ as a homotopy limit of generalized Moore spectra. The generalized Moore spectra are constructed by Hopkins and Smith (\cite{MR1652975}), explained and discussed in \cite[Chapter~6]{MR1192553} and \cite[Section~4]{MR1601906}.  Here we will follow the notation in \cite{MR1601906} and make specific choices for our purposes.  Let $S^0/p$ denote the cofiber of  $p \colon S^0 \rightarrow S^0$.  In general, $S^0/p^k$ denotes the cofiber of $p^k \colon S^0 \rightarrow S^0$. If $S^0/p^k$ admits a $v_1^{\ell}$-self map, let $S^0/(p^k,v_1^{\ell})$ denote the cofiber of $v_1^{\ell} \colon \Sigma^{\ell|v_1|} S^0/p^k \rightarrow S^0/p^k$.
	
	Recall that from the computation of $K(2)$ local spheres at $p \geqslant 5$, $S^0/p^{k+1}$ admits a $v_1^{p^k}$-self map and hence $S^0/(p^{k+1},v_1^{p^k})$ exists. Also $S^0/(p^{k+1},v_1^{p^k})$ admits a $v_2^{p^k}$-self map.  These $v_2^{p^k}$-self maps are weak equivalence (in $K(2)$-local category) so we can make the following definition.
		
	\begin{defn}\label{exp}
		Given $\alpha \in \R$, $S^{2\alpha}$ is defined as $\holim \Sigma^{2\alpha_k} S^0/(p^{k+1},v_1^{p^k})$. The maps in the inverse system are
		$$\Sigma^{2\alpha_{k+1}} S^0/(p^{k+2},v_1^{p^{k+1}}) \overset{q}{\rightarrow} \Sigma^{2\alpha_{k+1}} S^0/(p^{k+1},v_1^{p^k}) \overset{v_2^{a_{k+1}p^{k}}}{\rightarrow} \Sigma^{2\alpha_k} S^0/(p^{k+1},v_1^{p^k}),$$
		where the first map $q$ is the quotient and the second map is the $v_2^{a_{k+1}p^{k}}$-self maps.
	\end{defn}
The construction of $S^{2\alpha}$ gives the splitting map
\begin{align*}
	g \colon R &\rightarrow Pic_{K(2)}^0 \\
	\alpha &\rightarrow S^{2\alpha}.
\end{align*}
such that $e \circ g = \id_{Pic_{K(2)}^0}$. In particular, the group morphism $g$ is injective.


\section{Reduction modulo $p$}\label{reducesection}	
	


	In this section, we show that $X \in Pic^0_{K(2)}$ has finite type if and only if $\pi_k S^{2e (X)}/p$ is a finite dimensional $\mathbb{F}_p$ vector space for all $k \in \Z$.  The case $X=L_{K(2)}S^0$ is explained in \cite{MR2458150}.
	
	\begin{prop}{\label{fg}}
		Let $X \in Pic_{K(2)}$ and define $X/p$ as $X \wedge S^0/p$. Then $X$ is of finite type if and only if $\pi_k X/p$ is finite for all $k \in \Z$. 		
	\end{prop}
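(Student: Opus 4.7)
The plan is to reduce to a statement about $L$-complete $\Z_p$-modules, using the cofibre sequence $X \xrightarrow{p} X \to X/p$ to relate $\pi_* X$ and $\pi_* X/p$.

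First, from the cofibre sequence I extract a natural short exact sequence
\[
0 \to (\pi_k X)/p \to \pi_k(X/p) \to (\pi_{k-1}X)[p] \to 0,
\]
where $(\pi_{k-1}X)[p]$ denotes the $p$-torsion subgroup. This is the workhorse of both directions.

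For the easy direction ($\Rightarrow$): if each $\pi_k X$ is finitely generated over $\Z_p$, then $(\pi_k X)/p$ and $(\pi_{k-1}X)[p]$ are both finite (for a finitely generated $\Z_p$-module, mod-$p$ reduction and $p$-torsion are finite), and so $\pi_k(X/p)$ is finite by the short exact sequence above.

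For the nontrivial direction ($\Leftarrow$): assume $\pi_k(X/p)$ is finite for all $k$. The short exact sequence then forces $(\pi_k X)/p$ to be finite for every $k$. The key external input is that since $X$ is $K(2)$-local and lies in the Picard group, each $\pi_k X$ is an $L$-complete $\Z_p$-module in the sense of Hovey--Strickland, and for such a module $M$ one has the Nakayama-type statement: $M$ is finitely generated over $\Z_p$ if and only if $M/pM$ is a finite-dimensional $\mathbb{F}_p$-vector space. Applying this to each $\pi_k X$ gives finite generation and completes the proof.

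The main obstacle, and the only part that is not a routine diagram chase, is invoking the Nakayama lemma for $L$-complete modules; I would either cite \cite{MR1601906} (Appendix A, Theorem A.6 / Proposition 8.4) or give a one-line justification via the Milnor $\lim^1$ sequence for $M \cong \lim_n M/p^n M$. Everything else is formal.
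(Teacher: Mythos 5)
Your proposal is correct, and both directions rest on the same underlying principle as the paper (a completeness property of $\pi_*X$ plus a Nakayama-type finiteness argument), but the route for the harder direction is genuinely different in mechanism. The paper's ``only if'' direction is the same long exact sequence observation you package into the short exact sequence $0 \to (\pi_k X)/p \to \pi_k(X/p) \to (\pi_{k-1}X)[p] \to 0$. For the ``if'' direction, however, the paper works up the tower of cofiber sequences $X/p \to X/p^{i+1} \to X/p^i$ to show each $\pi_k X/p^i$ is finite, kills the $\limone$ term, and identifies $\pi_*X \cong \lim_i \pi_*X/p^i$, i.e.\ it establishes the relevant completeness by hand and leaves the final passage from ``inverse limit of finite $p$-groups with finite mod-$p$ quotient'' to ``finitely generated over $\Z_p$'' implicit; you instead quote that $K(2)$-local spectra are local with respect to the mod-$p$ Moore spectrum, so each $\pi_kX$ is $L$-complete (Ext-$p$-complete), and then apply the Nakayama lemma for $L$-complete modules from the Hovey--Strickland appendix. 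Your version is shorter, avoids the induction and Milnor sequence, and makes explicit exactly the step the paper elides; the paper's version is more self-contained in that it never needs the $L$-complete module machinery. Two small points: membership in the Picard group plays no role in your argument --- $K(2)$-locality alone gives $L$-completeness --- and you should double-check the precise statement numbers in Hovey--Strickland (the needed inputs are the Nakayama lemma for $L$-complete modules and the closure of that category under cokernels, both in their Appendix A), but these are matters of citation, not gaps.
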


	\begin{proof}	
	The only if part follows from the long exact sequence
	$$\cdots \rightarrow \pi_k X \xrightarrow{p} \pi_k X \rightarrow \pi_k X/p \rightarrow \pi_{k+1}X \rightarrow \cdots.$$
	The if part follows from the fact that $\pi_*X$ is $p$-complete when $\pi_k X/p$ is finite for all $k \in \Z$.  In this case, we can show $\pi_k X/p^i$ is also finite for all $k \in \Z$ inductively by the cofiber sequences
	$$X/p \rightarrow X/p^{i+1} \rightarrow X/p^i.$$
Then the $\lim^1$ term vanishes and we have
	\begin{equation*}
	\pi_*X \cong \lim \pi_*X/p^i.
	\end{equation*}
	\end{proof}

Now we focus on $X/p$ for $X \in Pic_{K(2)}$.  The following theorem tells that after reduction modulo $p$, the two topological generators $S^2$ and  $S[det]$ of $Pic_{K(2)}$ become the same up to a $\R$-suspension.
	
	\begin{thm}{\cite{MR1217353}; see also \cite[Theorem~3.11]{Goerss}}\label{thm:relation}
		Let $\lambda = \lim p^{2k}(p+1) \in \R$. Then there is an equivalence
		$$S^{2\lambda}/p = \Sigma^{2\lambda} S^0/p \simeq S[det]/p.$$
	\end{thm}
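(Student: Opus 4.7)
The plan is to identify each side with a cocycle in $H^1(\mathbb{G}_2,(E_2)_0^\times)$, use Proposition \ref{prop:alg} to show the two cocycles agree modulo $p$, and then invoke the classification of Picard-twisted spectra by their Morava modules.

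First, I would recall that each $X \in Pic^0_{K(2)}$ determines a crossed homomorphism $\phi_X \colon \mathbb{G}_2 \to (E_2)_0^\times$ by picking a generator of $(E_2)_0(X)$ and recording the twisted $\mathbb{G}_2$-action; the class $[\phi_X] \in H^1(\mathbb{G}_2,(E_2)_0^\times)$ is independent of this choice. Under this dictionary, $S^2$ corresponds to $t_0$, and $S^{2\alpha}$ corresponds to $t_0^\alpha$ for $\alpha \in \R$; the spectrum $S[det]$ corresponds, by its definition, to $det$. Hence $S^{2\lambda}$ has cocycle $t_0^\lambda$ and $S[det]$ has cocycle $det$.

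Next, I would apply Proposition \ref{prop:alg}, which gives $exp(p\zeta)=t_0^{-\lambda}det$ in $H^1(\mathbb{G}_2,(E_2)_0^\times)$. Since $\zeta$ takes values in $E_0$ and $exp(px)\in 1+pE_0$, the cocycle $exp(p\zeta)$ reduces to the trivial class in $H^1(\mathbb{G}_2,(E_2/p)_0^\times)$. Therefore $t_0^\lambda$ and $det$ become cohomologous after reduction modulo $p$, and so the Morava modules $(E_2/p)_*(S^{2\lambda}/p)$ and $(E_2/p)_*(S[det]/p)$ are isomorphic as twisted $(E_2/p)_*$-modules with $\mathbb{G}_2$-action.

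Finally, I would conclude by lifting this Morava-module isomorphism to an honest equivalence of $K(2)$-local spectra. For both sides this is a Picard twist of $S^0/p$, and on the subcategory of such twists the Morava module is a complete invariant; one can invoke Hopkins--Strickland's classification of small $K(n)$-local spectra, or alternatively realize the isomorphism by an explicit map and verify its fiber is $K(2)$-acyclic using the $K(2)$-local $E_2$-based descent spectral sequence. I expect this last step to be the main obstacle, since the first two steps are essentially bookkeeping with cocycles, whereas here one must produce or invoke a genuine spectrum-level classification theorem; but for the specific case of reductions mod $p$ of Picard elements this is standard and should cause no real trouble.
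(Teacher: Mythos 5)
The paper itself offers no proof of Theorem \ref{thm:relation}: it is quoted from \cite{MR1217353} and from Goerss's notes, so there is no internal argument to compare yours against, and your proposal has to be judged on its own. Its first two steps are correct and are exactly the cocycle bookkeeping the paper relies on elsewhere: $S^{2\lambda}$ and $S[\det]$ correspond to the classes of $t_0^{\lambda}$ and $\det$ in $H^1(\mathbb{G}_2,(E_2)_0^\times)$, and Proposition \ref{prop:alg} exhibits $t_0^{-\lambda}\det$ as $exp(p\zeta)$, hence as a class killed by reduction to $H^1(\mathbb{G}_2,(E_2/p)_0^\times)$, so $[t_0^{\lambda}]=[\det]$ mod $p$; this is precisely what Corollary \ref{cor:e} encodes as $e(S[\det])=\lambda=e(S^{2\lambda})$. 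You also avoid the potential circularity of invoking Theorem \ref{thm:modulep}, which the paper deduces \emph{from} the present theorem, so the algebraic half of your argument is sound.

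The genuine content is your last step, and there one of your two proposed justifications does not exist: Hovey--Strickland prove no classification of small $K(n)$-local spectra by their Morava modules, and in general Morava modules are not complete invariants (this is exactly the source of exotic Picard elements at small primes). What actually closes the argument is your second alternative, made precise by the large-prime sparseness the paper records for these spectral sequences: the descent $E_2$-term vanishes unless $2(p-1)\mid t$ and for $s>4$, and since $2(p-1)\geqslant 8$ when $p\geqslant 5$ there is no room for differentials on, or obstructions against, the class in $H^0\bigl(\mathbb{G}_2, Hom_{E_*}((E/p)_*S^{2\lambda},(E/p)_*S[\det])\bigr)$ given by your Morava-module isomorphism. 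The resulting map $S^{2\lambda}/p \rightarrow S[\det]/p$ induces an isomorphism on $(E/p)_*$ and is therefore a $K(2)$-local equivalence. So the outline is correct provided you drop the appeal to a ``classification of small spectra'' and carry out (or cite) the realization argument via the $E_2$-based descent spectral sequence, which is where the hypothesis $p\geqslant 5$ genuinely enters.
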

	
In particular, this implies that $X/p$ is determined by $e(X) \in \R$.
\begin{thm}\label{thm:modulep}
Let $X$, $Y$ be elements in $Pic_{K(2)}^0$. Then $X/p \simeq Y/p$ if and only if $e(X)=e(Y)$.
\end{thm}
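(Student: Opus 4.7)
The plan is to show that $X/p$ is determined up to equivalence by the single invariant $e(X)\in\R$, and conversely that $e(X)$ can be recovered from $X/p$. Concretely, my first target is the identity
\[
X/p \;\simeq\; S^{2e(X)}/p \quad \text{for every } X\in Pic^0_{K(2)},
\]
from which both directions of the theorem follow immediately.

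For the forward direction, I would start from the presentation $X \simeq S^{2\alpha}\wedge S^{\beta}[det]$ with $\alpha,\beta\in\R$, together with Corollary~\ref{cor:e} giving $e(X)=\alpha+\lambda\beta$. The key input is Theorem~\ref{thm:relation}, which provides the equivalence $S[det]/p\simeq S^{2\lambda}/p$ for the topological generator. I would then upgrade this to the family of equivalences $S^{\beta}[det]/p\simeq S^{2\lambda\beta}/p$ for every $\beta\in\R$, using continuity of the $\R$-action on $Pic^0_{K(2)}$ together with the fact that mod-$p$ reduction is compatible with this action. Smashing with $S^{2\alpha}$ then yields $X/p\simeq S^{2(\alpha+\lambda\beta)}/p = S^{2e(X)}/p$. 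Applying the resulting identity to both $X$ and $Y$ gives $X/p\simeq Y/p$ whenever $e(X)=e(Y)$.

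For the converse direction, I would argue on Morava modules. An equivalence $X/p\simeq Y/p$ of spectra induces an isomorphism $(E_2)_*(X/p)\cong (E_2)_*(Y/p)$ of $\mathbb{G}_2$-equivariant $(E_2)_*/p$-modules. Since $X,Y\in Pic^0_{K(2)}$, each side is a free rank-one $(E_2)_*/p$-module concentrated in even degrees, so the isomorphism class is classified by a single element of $H^1(\mathbb{G}_2,(E_2/p)_0^\times)\cong\R$. By the very definition of $e$ as the map on $H^1$ induced by the quotient $(E_2)_0^\times \to (E_2/p)_0^\times$, this classifying element is precisely $e(X)$ (respectively $e(Y)$), so $e(X)=e(Y)$.

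The main obstacle is the continuity step in the forward direction. Theorem~\ref{thm:relation} gives the equivalence $S[det]/p\simeq S^{2\lambda}/p$ only for the generator $S[det]$, but the statement needs it after applying the full continuous $\R$-action. To make this rigorous I would work with the explicit pro-system of Definition~\ref{exp} defining $S^{2\alpha}$, and verify that the equivalence of Theorem~\ref{thm:relation} propagates through the mod-$p$ reduction of this inverse system; alternatively, one can observe that the set of equivalence classes $\{X/p : X\in Pic^0_{K(2)}\}$ inherits a continuous $\R$-module structure under which $X\mapsto X/p$ is $\R$-linear, so the identity at $\beta=1$ extends to all $\beta\in\R$ by continuity and the density of $\mathbb{Z}\subset\R$.
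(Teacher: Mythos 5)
Your proof is correct and takes essentially the same route as the paper: write $X \simeq S^{2\alpha}\wedge S^{\beta}[det]$ and combine Theorem~\ref{thm:relation} with Corollary~\ref{cor:e} to obtain $X/p \simeq S^{2e(X)}/p$, from which the equivalence follows. The paper's proof stops at that identity and asserts the ``if and only if''; your Morava-module argument for the converse (recovering $e(X)$ from the class of $(E_2)_*(X)/p$ in $H^1(\mathbb{G}_2,(E_2/p)_0^\times)$) and your explicit treatment of extending $S[det]/p\simeq S^{2\lambda}/p$ over the continuous $\R$-action make precise steps the paper leaves implicit.
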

\begin{proof}
In Section \ref{picardgroup}, we see that an element $X \in Pic_{K(2)}^0$ can be presented as $S^{2\alpha} \wedge S^\beta[det]$ where $\alpha \, \beta \in \R$.  By Theorem \ref{thm:relation} and Corollary \ref{cor:e}, we have
$$X/p \simeq S^{2\alpha+2\beta\lambda}/p= S^{2e(X)}/p.$$
Therefore, we have $X/p \simeq Y/p$ if and only if $e(X)=e(Y)$.
\end{proof}

	Now for a given element $X \in Pic_{K(2)}^0$, Property \ref{fg} tells us that $X$ is of finite type if and only if $X/p$ is, and Theorem \ref{thm:modulep} implies that $X/p \simeq S^{2e (X)}/p$.  Hence, the question if $X$ is of finite type rededuces to the question if $S^{2e(X)} /p $ is of finite type. We have the following corollary.
	
	\begin{cor}{\label{reduce}}
		Given $X \in Pic_{K(2)}^0$, $X$ is of finite type if and only if $S^{2e (X)}/p$ is of finite type.
	\end{cor}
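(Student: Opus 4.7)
The statement is an immediate consequence of the two results just established, so my plan is to simply chain them together. The key observation is that both Proposition \ref{fg} and Theorem \ref{thm:modulep} translate statements about an arbitrary $X \in Pic_{K(2)}^0$ into statements about $S^{2e(X)}/p$, so no genuinely new input is required.

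The plan is as follows. First I would apply Proposition \ref{fg} to $X$ to replace the condition ``$X$ is of finite type'' by the equivalent condition that $\pi_k (X/p)$ is finite for every $k \in \Z$. Next I would invoke Theorem \ref{thm:modulep}, which gives an equivalence of spectra $X/p \simeq S^{2e(X)}/p$, and therefore an isomorphism $\pi_k(X/p) \cong \pi_k(S^{2e(X)}/p)$ for all $k$. Finally, I would apply Proposition \ref{fg} in the other direction to $S^{2e(X)}$: since $\pi_*(S^{2e(X)}/p)$ consists of $\mathbb{F}_p$-vector spaces, finitely generated over $\Z_p$ is the same as finite, so ``$S^{2e(X)}/p$ is of finite type'' is equivalent to ``$\pi_k (S^{2e(X)}/p)$ is finite for every $k$''. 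Concatenating these three equivalences gives the corollary.

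There is no real obstacle here; both of the required ingredients have been proved in the preceding subsections, and the only thing to check is that the notion ``of finite type'' behaves correctly when passed through the smash product with $S^0/p$ on both sides. Since this is symmetric in $X$ and $S^{2e(X)}$ (both lie in $Pic_{K(2)}^0$, and Proposition \ref{fg} applies uniformly), the argument is a few lines of bookkeeping. The upshot is that the problem of determining which $X \in Pic_{K(2)}^0$ are of finite type is reduced to the single one-parameter family $\{S^{2\alpha}/p \mid \alpha \in \R\}$, which is exactly the setup needed for the computational analysis in the following section.
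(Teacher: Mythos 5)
Your proposal is correct and follows exactly the paper's reasoning: the paper deduces the corollary from Proposition \ref{fg} combined with the equivalence $X/p \simeq S^{2e(X)}/p$ of Theorem \ref{thm:modulep}, just as you do. The only extra point you add, that finitely generated equals finite for the $p$-torsion homotopy of a mod $p$ spectrum, is a harmless and correct bookkeeping remark.
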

\section{computation of $\pi_* S^{2\alpha} / p$}\label{computationsection}	
	In this section, we give a computation of $\pi_* S^{2\alpha} / p$ for any $\alpha \in \R$.  Our computation is based on the homotopy limit construction of $\alpha$ spheres and the known computation of $\pi_*L_{K(2)}S^0/p$. The latter is computed by Shimomura and Yabe \cite{MR1318877} and explained by Behrens \cite{MR2914955}.  See also Lader's thesis \cite{Lader} for a more group theoretical computation of $\pi_*L_{K(2)}S^0/p$. (We will often omit the $L_{K(2)}$ in the following discussion.)
	
	We compute $\pi_n S^{2\alpha} / p=\pi_n\holim \Sigma^{2\alpha_k}S^0/(p,v_1^{p^k})$ via the short exact sequence
	$$0\rightarrow \limone \pi_{n+1}\Sigma^{2\alpha_k}S^0/(p,v_1^{p^k}) \rightarrow \pi_n\holim \Sigma^{2\alpha_k}S^0/(p,v_1^{p^k}) \rightarrow \lim \pi_n\Sigma^{2\alpha_k}S^0/(p,v_1^{p^k})\rightarrow 0.$$
	Because $\pi_n \Sigma^{2\alpha_k}S^0/(p,v_1^{p^k})$ is finite for all $n \in \mathbb{Z}$, the $\limone$ term vanishes and
	$$\pi_n S^{2\alpha} / p \cong \lim \pi_n \Sigma^{2\alpha_k}S^0/(p,v_1^{p^k}).$$
	
	We compute $\pi_* S^0/(p,v_1^{p^k})$ via the homotopy fixed point spectral sequence
	$$E_2^{s,t}=H^s (\G, E_t (S^0/(p,v_1^{p^k}))) \Rightarrow \pi_{t-s}(S^0/(p,v_1^{p^k})).$$
	There is no room for differentials and nontrivial extensions for degree reasons because 
	$$E_2^{s,t}=0 ~ when ~ 2(p-1) \nmid t ~ or ~s>4,$$
	$$d_r \colon E_r^{s,t} \rightarrow E_r^{s+r,t+r-1}.$$
	For all nontrivial group $\pi_m L_{K(2)}S^0/(p,v_1^k)$, there is a unique pair $(s,t)$ such that $m=t-s$ and 
$$\pi_m L_{K(2)}S^0/(p,v_1^k) \cong H^s(\mathbb{G}_2, E_t/(p,v_1^k)).$$
Therefore, we will not distinguish elements in the $E_2$ page and elements in the homotopy groups.

We list the result of $\pi_*S^0/p$ below as Theorem \ref{computation}.  We need to compute the map in the inverse limit
	$$f_{k+1} \colon \Sigma^{2 \alpha_{k+1}} S^0/(1,p^{k+1}) \overset{q}{\rightarrow} \Sigma^{2 \alpha_{k+1}} S^0/(1,p^k) \overset{v_2^{a_{k+1}p^{k}}}{\rightarrow} \Sigma^{2 \alpha_k} S^0/(1,p^k).$$
	where $\alpha=a_0+a_12(p^2-1)+a_2 2(p^2-1)p+\cdots$. Algebraically this is the quotient map composed with the multiplication of $v_2^{a_{k+1}p^{k}}$
	\begin{align*}
 	\pi_*f_{k+1} \colon H^s(\mathbb{G}_2,E_*\Sigma^{2 \alpha_{k+1}} S^0/(1,p^{k+1}) & \rightarrow H^s(\mathbb{G}_2,E_* \Sigma^{2 \alpha_k} S^0/(1,p^k))\\
 	x &\rightarrow v_2^{a_{k+1}p^{k}}x.
 	\end{align*}

	Therefore, the computation of $\pi_* S^\alpha / p$ reduces to the computation of the limit, which we will explain in this section.  We will need some computational results about $\pi_* S^0/(p, v_1^{p^k})$.  We follow the notation in \cite{MR2914955}, for readers who are familiar with Shimomura's notations, there is a dictionary in \cite{MR2914955} between the names.  The algebraic descriptions are convenient for limit computations, but pictures of the result are much easier to digest, the author encourages readers to see figures in \cite{MR2914955}.  In this section, all the differentials are really $v_1$-Bockstein spectral sequence differentials.  In particular, we only list the leading term with respect to $v_1$-power in the formula.  For example, the differential
	$$d v_2^{sp^n} = v_1^{b_n}v_2^{sp^n-p^{n-1}} h_0$$
really means
	$$d v_2^{sp^n} = a v_1^{b_n}v_2^{sp^n-p^{n-1}} h_0+\text{higher $v_1$ terms}$$
	for some $a \in \mathbb{F}_p^\times$ in the $v_1$-Bockstein spectral sequence.

\begin{defn}
Let $A$ be the $\mathbb{F}_p-$algebra generated by $6$ elements $\{1, h_0,h_1,g_0,g_1, h_0g_1=h_1g_0\}$.  We denote this basis as $B$.  We assign bidegrees (homological degree, internal degree) to elements in $A$ as follows
\begin{align*}
|1|&=(0,0),           &  |h_0|&=(1, 2(p-1)),              &  |h_1|&=(1, -2(p-1)), \\
|g_0|&=(2, 2(p-1)),         &  g_1&=|2, -2(p-1)|,   &  |h_0g_1|&=(3,0).
\end{align*}
\end{defn}

\begin{thm}{\cite[Theorem 3.2]{MR0431168}}
We have
$$H^s(\mathbb{G}_2, (E_2/(p, v_1))_t) = \mathbb{F}_p[v_2^{\pm 1}]\otimes A \otimes \Lambda[\zeta].$$
\end{thm}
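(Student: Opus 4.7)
The plan is to reduce the computation to two classical inputs: the cohomology of the Morava stabilizer group and Lazard's theorem relating continuous cohomology of uniform pro-$p$ groups to Lie algebra cohomology. First, I would exploit the semi-direct product decomposition $\mathbb{G}_2 \cong \mathbb{S}_2 \rtimes \Gal(\mathbb{F}_{p^2}/\mathbb{F}_p)$ and run the Lyndon--Hochschild--Serre spectral sequence
\[
E_2^{s,t} = H^s(\Gal, H^t(\mathbb{S}_2, (E_2/(p,v_1))_*)) \Rightarrow H^{s+t}(\mathbb{G}_2, (E_2/(p,v_1))_*).
\]
Since $|\Gal| = 2$ is coprime to $p$ and the coefficients are $p$-torsion, this spectral sequence collapses to Galois invariants, reducing the problem to computing $H^*(\mathbb{S}_2, (E_2/(p,v_1))_*)^{\Gal}$.

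Next, since $p \geqslant 5$, I would further reduce to a pro-$p$ subgroup. The Morava stabilizer $\mathbb{S}_2$ contains a maximal pro-$p$ subgroup $S_2$ of index $p^2-1$ (the kernel of reduction modulo the maximal ideal of the Morava stabilizer ring), and $S_2$ is uniform. Because $p^2-1$ is coprime to $p$, one has $H^*(\mathbb{S}_2, M) \cong H^*(S_2, M)^{\mathbb{F}_{p^2}^\times}$. For $S_2$, Lazard's isomorphism converts the continuous cohomology into the Chevalley--Eilenberg cohomology of the associated $\mathbb{Z}_p$-Lie algebra, which has a known explicit presentation. Computing this Lie algebra cohomology with coefficients in $(E_2/(p,v_1))_* \cong \mathbb{F}_{p^2}[u^{\pm 1}]$ produces, after tracking the internal grading, precisely the generators $h_0, h_1, g_0, g_1$ (and the relation $h_0 g_1 = h_1 g_0$) in the claimed bidegrees, together with the $v_2^{\pm 1}$-periodicity.

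Taking invariants under $\mathbb{F}_{p^2}^\times$ descends scalars from $\mathbb{F}_{p^2}$ to $\mathbb{F}_p$ and isolates exactly the weight-$0$ classes, yielding $\mathbb{F}_p[v_2^{\pm 1}] \otimes A$; taking further Galois invariants does not kill any of these symmetric classes. Finally, the exterior factor $\Lambda[\zeta]$ enters because the determinant crossed homomorphism $\zeta\colon \mathbb{G}_2 \to \mathbb{Z}_p$ from \Cref{definition:zeta} is nontrivial on $\mathbb{G}_2$ but restricts trivially to $\mathbb{S}_2$, so it appears as a new exterior generator in $H^1(\mathbb{G}_2, -)$ not present in $H^1(\mathbb{S}_2, -)$; by a transfer/edge-homomorphism argument on the LHS spectral sequence it contributes precisely one exterior class.

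The main obstacle is the explicit Chevalley--Eilenberg computation on the non-trivially acted-on module $\mathbb{F}_{p^2}[u^{\pm 1}]$: identifying the cohomology classes as $h_0, h_1, g_0, g_1$ with the stated bidegrees and verifying the relation $h_0 g_1 = h_1 g_0$ requires careful bookkeeping of the Milnor generators in the Lie algebra and their Koszul duals. Since this calculation is exactly the content of the cited reference, a self-contained proof would reproduce it; for the purposes of this paper I would simply appeal to \cite{MR0431168}.
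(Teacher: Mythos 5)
The paper does not prove this statement at all: it is imported verbatim from Ravenel \cite{MR0431168}, so your final step of deferring the hard computation to that reference is in effect what the paper itself does. However, the argument you wrap around the citation contains a genuine error, namely your account of the factor $\Lambda[\zeta]$. By \Cref{definition:zeta}, $\zeta$ is the reduced determinant on the $\mathbb{S}_2$-factor (extended by zero on the Galois factor) composed with $\mathbb{Z}_p^\times \cong \mathbb{Z}_p$; since the reduced norm is surjective on $\mathbb{S}_2$, the restriction $\zeta|_{\mathbb{S}_2}$ is a nonzero class generating $H^1(\mathbb{S}_2,\mathbb{Z}_p)$, so it is false that $\zeta$ ``restricts trivially to $\mathbb{S}_2$.'' Moreover, the mechanism you propose cannot produce the exterior class even in principle: because $p$ is odd and the coefficients are $p$-torsion, $H^{s}(\Gal(\mathbb{F}_{p^2}/\mathbb{F}_p),-)=0$ for $s>0$, so the Lyndon--Hochschild--Serre spectral sequence you write down collapses exactly onto Galois invariants and no transfer/edge argument can create a class on $\mathbb{G}_2$ that is not already a Galois-invariant class on $\mathbb{S}_2$. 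The correct source of $\Lambda[\zeta]$ is the splitting $\mathbb{G}_2 \cong \mathbb{G}_2^1 \rtimes \mathbb{Z}_p$ with $\mathbb{G}_2^1=\ker\zeta$ (which the paper invokes immediately after the theorem): the $\mathbb{Z}_p$-direction, i.e.\ the determinant direction \emph{inside} the stabilizer group rather than the Galois direction, contributes $H^*(\mathbb{Z}_p;\mathbb{F}_p)\cong\Lambda[\zeta]$, the action on $(E_2/(p,v_1))_*$ being trivial on that factor.

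Two further points are shakier than you present them. First, $S_2=1+\mathfrak{m}$ is not uniform: commutators only land in $1+\mathfrak{m}^2$ while, since $p\in\mathfrak{m}^2$ at height $2$, $p$-th powers land in $1+\mathfrak{m}^3$, so the group is not powerful; what is true for $p\geqslant 5$ is that it is $p$-torsion free and $p$-valued, which is the hypothesis Lazard-type arguments actually use. Second, the Lazard isomorphism with nontrivial mod-$p$ coefficients such as $\mathbb{F}_{p^2}[u^{\pm 1}]$ is not an off-the-shelf statement; Ravenel's actual computation filters the Morava stabilizer algebra and runs May/Bockstein-type spectral sequences whose differentials must be determined, rather than performing a direct Chevalley--Eilenberg computation. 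Also, taking $\mathbb{F}_{p^2}^\times$-invariants does not ``descend scalars from $\mathbb{F}_{p^2}$ to $\mathbb{F}_p$'' --- that is what the Galois invariants accomplish; the $\mathbb{F}_{p^2}^\times$-invariants select the weight-zero (internal-degree-divisible) part. None of this matters if you simply cite Ravenel, as the paper does, but as written your sketch misidentifies where $\zeta$ comes from and would not reproduce the stated answer.
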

Let $\mathbb{G}_2^1$ denote the kernel of the homomorphism in (\ref{definition:zeta}) $\zeta \colon \mathbb{G}_2 \rightarrow \mathbb{Z}_p$.
Then $\mathbb{G}_2=\mathbb{G}_2^1 \rtimes \mathbb{Z}_p$
and 
$$H^s(\mathbb{G}_2^1, (E_2/p)_t) = H^s(\mathbb{G}_2^1, (E_2/p)_t) \otimes \Lambda[\zeta].$$
\begin{thm}[\cite{MR1318877},\cite{MR2914955}]{\label{computation}}
		There exists a complex $(C_0,d)$ such that $H^*(C_0,d)=H^*(\mathbb{G}_2^1, (E_2/p)_*)$ where $C_0 := A \otimes \mathbb{F}_p[v_2^{\pm 1}] \otimes \mathbb{F}_p[v_1] $ and differentials  given in table \ref{diff0} are $v_1$ linear. The $\mathbb{F}_p-$generators are listed in table \ref{element0}.
		
		Let $C$ be $C_0 \otimes \Lambda(\zeta)$. Then we have
		$$H^*(C,d)=H^*(\mathbb{G}_2, (E_2)_*/p).$$
	\end{thm}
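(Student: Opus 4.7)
The plan is to set up and analyze a $v_1$-Bockstein spectral sequence on the group cohomology. Starting from the short exact sequence of continuous $\mathbb{G}_2^1$-modules
\[
0 \to E_2/p \xrightarrow{v_1} E_2/p \to E_2/(p,v_1) \to 0,
\]
I would obtain a spectral sequence
\[
E_1 = H^*(\mathbb{G}_2^1, (E_2/(p,v_1))_*) \otimes \mathbb{F}_p[v_1] \Longrightarrow H^*(\mathbb{G}_2^1, (E_2/p)_*),
\]
whose $E_1$-page, by Ravenel's Theorem 3.2 of \cite{MR0431168} recalled just above, is $A \otimes \mathbb{F}_p[v_2^{\pm 1}] \otimes \mathbb{F}_p[v_1]$. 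This identifies the underlying $\mathbb{F}_p$-vector space of $C_0$, and $v_1$-linearity of the differential is built into the Bockstein construction since $v_1$ is a cocycle and acts freely on $E_1$.

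Next I would identify the differentials following Shimomura--Yabe \cite{MR1318877} as reorganized by Behrens \cite{MR2914955}. The essential inputs are the differentials on powers of $v_2$: for each $n \geq 1$ and $s$ coprime to $p$, one has $d(v_2^{sp^n}) = a\cdot v_1^{b_n} v_2^{sp^n - p^{n-1}} h_0 + (\text{higher } v_1\text{-terms})$ with explicit exponents $b_n$, and these together with $d(h_i)$, $d(g_i)$ on the generators of $A$ determine every differential via the Leibniz rule and $v_1$-linearity. These formulas are derived by inspecting the action of the Morava stabilizer group on a chosen lift of the $v_i$ in $(E_2)_*$ modulo $(p,v_1)$ and then propagating through the Bockstein. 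Recording the classes that survive to $E_\infty$, organized by $v_2$-valuation, recovers the $\mathbb{F}_p$-basis listed in Table \ref{element0}.

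For the passage from $\mathbb{G}_2^1$ to $\mathbb{G}_2$ I would invoke the splitting $\mathbb{G}_2 \cong \mathbb{G}_2^1 \rtimes \mathbb{Z}_p$ induced by $\zeta$ of Definition \ref{definition:zeta} and the Lyndon--Hochschild--Serre spectral sequence
\[
E_2^{p,q} = H^p(\mathbb{Z}_p, H^q(\mathbb{G}_2^1, (E_2/p)_*)) \Longrightarrow H^{p+q}(\mathbb{G}_2, (E_2/p)_*).
\]
Since $\mathbb{Z}_p$ has cohomological dimension one, $H^*(\mathbb{Z}_p, M)$ is $M$ in degree zero and $M$ (pulled back via $\zeta$) in degree one, so after checking that the action of the $\mathbb{Z}_p$ factor on $H^*(\mathbb{G}_2^1, (E_2/p)_*)$ is trivial, the spectral sequence degenerates to tensoring with $\Lambda(\zeta)$, yielding $H^*(C,d) = H^*(\mathbb{G}_2, (E_2)_*/p)$ as claimed.

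The hard part is the identification of the exact $v_1$-Bockstein differentials: the exponents $b_n$ in Table \ref{diff0} encode delicate arithmetic of the Morava stabilizer action on $v_2$ and its $p^n$-th powers, and verifying them is the technical heart of Shimomura--Yabe's work. My plan would invoke their computation rather than reproduce it, since the remainder of the paper only needs the structural statement and the leading-order differential formulas recorded in the table.
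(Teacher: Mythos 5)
Your outline matches how this result is established in the literature and how the paper itself treats it: the paper offers no independent proof, simply importing the computation from Shimomura--Yabe as reorganized by Behrens (with the chain-complex packaging due to Henn--Karamanov--Mahowald), and your plan likewise defers the $v_1$-Bockstein differentials to those sources while correctly sketching the surrounding framework (the $v_1$-Bockstein spectral sequence from Ravenel's computation of $H^*(\mathbb{G}_2,(E_2/(p,v_1))_*)$, and the splitting $\mathbb{G}_2\cong\mathbb{G}_2^1\rtimes\mathbb{Z}_p$ giving the $\Lambda(\zeta)$ factor). So the proposal is correct and essentially the same approach as the paper.
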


			\begin{table}[H] 
			\centering
			\caption{differentials in $C_0$ \cite[Section 4]{MR1318877}, \cite[Theorem 3.2]{MR2914955}}
			\label{diff0}
			\begin{tabular}{| l | l |}
			\hline
 				$d v_2^{sp^n} = v_1^{b_n}v_2^{sp^n-p^{n-1}} h_0$	&  	$p \nmid s, n \geqslant 1$	\\
			\hline
				$d v_2^s = v_1 v_2^s h_1$		&  	$p \nmid s$	\\
			\hline
				$d v_2^{sp^n} h_0 =  v_1^{A_n + 2} v_2^{sp^n - \frac{p^n-1}{p-1}} g_1$	&	$ s \not\equiv 0, -1 ~ mod ~ p, ~ n \geqslant 0$	\\
			\hline
				$d v_2^{sp^n - p^{n-2}} h_0 = v_1^{p^n - p^{n-2} + A_{n-2} + 2} v_2^{sp^n - p^{n-1} - \frac{p^{n-2} - 1}{p-1}} g_1 $	 &	$ \forall s, ~ n \geqslant 2$ \\
			\hline
 				$d v_2^{sp} h_1 = v_1^{p-1} v_2^{sp - 1} g_0$	&	$ \forall s$  \\
			\hline
 				$d v_2^{sp^n - \frac{p^{n-1}-1}{p-1} } g_1 = v_1^{b_n} v_2^{sp^n - \frac{p^n - 1}{p-1}} h_0 g_1$	&	$ s \not\equiv -1 ~ mod ~ p, ~ n \geqslant 1$ 	\\
			\hline
 				$d v_2^s g_0 = v_1 v_2^s h_0 g_1$	&	$ s \not\equiv -1 ~ mod ~ p$\\
			\hline				
			\end{tabular}
			\end{table}

We follow the notation in \cite{MR2914955} to define $b_n$ and $A_n$ as follows
$$b_n=
\begin{cases}
p^{n-1}(p+1)-1, \,\,\, n\geqslant 1,\\
1, \,\,\, n=0;
\end{cases}
$$
$$
A_n=
\begin{cases}
(p^n-1)(p+1)/(p-1), \,\,\, n\geqslant 1,\\
0, \,\,\, n=0.
\end{cases}
$$

			\begin{table}[H] 
			\centering
			\caption{Generators for $H^*(C_0, d)$}
			\label{element0}
			\begin{tabular}{| l |p{5cm} | p{5cm}|}
			\hline
				$v_1^j$		&  	&	$j \geqslant 0 $ \\
				\hline
				$v_1^j h_0$	&	&  	$j \geqslant 0 $	\\
				\hline
				$v_1^j v_2^{sp^n-p^{n-1}} h_0$	&	$ p \nmid s, ~ n \geqslant 1$	&	$ 0 \leqslant j \leqslant b_n - 1 $ \\
				\hline
				$v_2^s h_1$	&	$p \nmid s$	&	\\
				\hline
				$v_1^j v_2^{sp-1} g_0$	&	$\forall s$	&	$ 0 \leqslant j \leqslant  p-2, p^k-1 $ 	\\
				\hline
				$v_1^j v_2^{sp^n - \frac{p^n - 1}{p - 1}} g_1 $	 &	$ s \not\equiv 0, -1 ~ mod ~ p, ~ n \geqslant 0$	&	$ 0 \leqslant j \leqslant A_n + 1 $ \\
				\hline
 				$v_1^j v_2^{sp^n - p^{n-1} - \frac{ p^{n-2}-1}{p-1}} g_1 $	&	$\forall s, ~ n \geqslant 2 $	&	$ 0 \leqslant j \leqslant p^n - p^{n-2} +A_{n-2} +1 $	\\
				\hline
 				$v_1^j v_2^{sp^n - \frac{p^n - 1}{p-1}} h_0 g_1$	&	$ s \not\equiv -1 ~ mod ~ p, ~ n \geqslant 0$ 	&	$ 0 \leqslant j \leqslant  b_n - 1 $	\\
				\hline
			\end{tabular}
			\end{table}
			
			For $S^0/(p, v_1^{p^k})$, we have $H^*(\mathbb{G}_2^1, E_* S^0/(p, v_1^{p^k}) )= H^*(C/v_1^{p^k}, d),$ which can be computed from $H^*(C,d)$ via the long exact sequence
		$$\cdots \rightarrow H^s(C,d) \xrightarrow{\times v_1^{p^k}} H^s(C,d) \xrightarrow{q} H^s(C/v_1^{p^k},d) \rightarrow \cdots.$$
		
The $\mathbb{F}_p$-generators of $H^*(C_0/v_1^{p^k},d)$ are listed in Table~\ref{element1}, where the elements are divided into two subsets: the upper half $Coker$ part (the cokerel of $v_1^{p^k}$), and the lower half $Ker$ part (the kernel of $v_1^{p^k}$).  While the elements in the $Coker$ part have straightforward names, we need to keep track of the boundary connecting morphism to name the elements in the $Ker$ part.  For example, $v_1^jv_2^{sp^n-p^{n-1}}h_0$ with $p \nmid s$, $n \geqslant 1$, $\max\{0,\,b_n-p^k\} \leqslant j \leqslant p^k-1$ is in the kernel of
$$\times v_1^{p^k} \colon H^1(C_0,d) \rightarrow H^1(C_0,d)$$ when $k \geqslant 1$.  Let $\partial$ be the boundary connecting morphism
$$\partial_0 \colon H^0(C_0/v_1^{p^k},d) \rightarrow H^1(C_0,d).$$
in the long exact sequence
$$H^0(C_0,d) \xrightarrow{v_1^{p^k}} H^0(C_0,d) \rightarrow H^0(C_0/v_1^{p^k},d) \xrightarrow{\partial_0} H^1(C_0,d) \xrightarrow{v_1^{p^k}} H^1(C_0,d) \rightarrow \cdots$$

By the snake lemma and the following differential in $C_0$, for $p \nmid s$, $n \geqslant 1$
$$d_0(v_2^{sp^n})=v_1^{b_n}v_2^{sp^n-p^{n-1}}h_0,$$
we have
$$\partial (v_1^{j-b_n+p^k}v_2^{sp^n})=v_1^jv_2^{sp^n-p^{n-1}}h_0.$$
After reindexing the power of $v_1$, we name the corresponding $\mathbb{F}_p-$generators in the $Ker$ part of $H^0(C_0/v_1^{p^k})$ as $v_1^jv_2^{sp^n}$ with $p \nmid s$, $n \geqslant 1$ and $\max\{0,\,p^k-b_n\} \leqslant j \leqslant p^k-1.$  Similarly, the first row in the $Ker$ part comes from the differential 
$$dv_2^s=v_1v_2^sh_1.$$

			Denote the result of $H^*(C_0/v_1^{p^k},d)$ as $X_1$, then $X_1 \otimes \Lambda (\zeta)$ gives the $E_2$ page and in this case also the $E_{\infty}$ page of the ANSS that converges to $\pi_* S^0/(p, v_1^{p^k})$.
			\begin{table}[H] 
			\centering
			\caption{Generators for $H^*(C_0/(v_1^{p^k}), d)$}
			\label{element1}
			\begin{tabular}{| p{3.5cm} | l |p{3.5cm} | p{3.2cm}| p{5.2cm}}
			\hline
 				Names & $Coker$ part		&	&	\\
			\hline
			\hline
				
				$(1, k)$ & $v_1^j$		&  	&	$0 \leqslant j \leqslant p^k-1 $	\\
				\hline
				$(h_0, k)$ & $v_1^j h_0$	&	&  	$0 \leqslant j \leqslant p^k-1 $	\\
				\hline
				$(v_2^{sp^n-p^{n-1}}h_0,k)$ & $v_1^j v_2^{sp^n-p^{n-1}} h_0$	&	$ p \nmid s, ~ n \geqslant 1$	&	$ 0 \leqslant j \leqslant min \{ b_n - 1, p^k - 1 \}$ \\
				\hline
				$(v_2^sh_1,k)$ & $v_2^s h_1$	&	$p \nmid s$	&	\\
				\hline
				$(v_2^{sp-1} g_0, k)$ & $v_1^j v_2^{sp-1} g_0$	&	$\forall s$	&	$ 0 \leqslant j \leqslant min\{ p-2, p^k-1 \}$ 	\\
				\hline
				$(v_2^{sp^n - \frac{p^n - 1}{p - 1}} g_1,k)$ & $v_1^j v_2^{sp^n - \frac{p^n - 1}{p - 1}} g_1 $	 &	$ s \not\equiv 0, -1 ~ mod ~ p, ~ n \geqslant 0$	&	$ 0 \leqslant j \leqslant min\{A_n + 1, p^k - 1 \} $ \\
				\hline
 				 $(v_2^{sp^n - p^{n-1} - \frac{ p^{n-2}-1}{p-1}} g_1,
				 k)$ & $v_1^j v_2^{sp^n - p^{n-1} - \frac{ p^{n-2}-1}{p-1}} g_1$	&	$\forall s, ~ n \geqslant 2 $	&	$ 0 \leqslant j \leqslant min \{ p^n - p^{n-2} +A_{n-2} +1, p^k - 1 \}$	\\
				\hline
 				$(v_2^{sp^n - \frac{p^n - 1}{p-1}} h_0 g_1, k)$ & $v_1^j v_2^{sp^n - \frac{p^n - 1}{p-1}} h_0 g_1$	&	$ s \not\equiv -1 ~ mod ~ p, ~ n \geqslant 0$ 	&	$ 0 \leqslant j \leqslant min \{ b_n - 1, p^k - 1 \}$	\\
 				
				\hline
				\hline

				&$Ker$ part	&	&	\\
				\hline
				\hline

				&$v_1^{p^k - 1} v_2^s$		&	$ p \nmid s$	&	\\
				\hline
				&$v_1^j v_2^{sp^n}$		&	$ p \nmid s, ~ n \geqslant 1$	&	$max \{ 0, p^k - b_n \} \leqslant j \leqslant p^k - 1$ \\
				\hline
				&$v_1^j v_2^{sp^n} h_0 $	&	$ s \not\equiv 0, -1 ~mod ~ p, ~ n \geqslant 0$	&	$ max \{0, p^k - A_n -2 \} \leqslant j \leqslant p^k -1$ \\
				\hline
				&$v_1^j v_2^{sp^n - p^{n-2}} h_0$	&	$\forall s, ~ n \geqslant 2$	&	$max \{0, p^k - (p^n - p^{n-2} + A_{n-2} +2) \} \leqslant j \leqslant p^k - 1$	\\
				\hline
				&$v_1^j v_2^{sp} h_1$		& $\forall s$	&	$max \{0, p^k - p +1 \} \leqslant j \leqslant p^k - 1$\\
				\hline
				&$v_1^{p^k - 1} v_2^s g_0$	&	$ s \not\equiv -1 ~mod ~ p$	&	\\
				\hline
				&$v_1^j v_2^{sp^n - \frac{p^{n-1}-1}{p-1}} g_1$ 	&	$ s \not\equiv -1~mod ~ p,~ n \geqslant 1$	&	$max \{ 0, p^k - b_n \} \leqslant j \leqslant p^k - 1$	\\
				\hline		
			\end{tabular}
		\end{table}
	
	The essential computation is done by Shimomura and Yabe \cite{MR1318877}.  The author learned it from Behrens's paper \cite{MR2914955}.  The idea of organizing the result as a chain complex goes back to Henn, Karamanov and Mahowald.
	
	We introduce some notation before going into the computation of $\displaystyle\lim_k\pi_*\Sigma^{2\alpha_k}S/(p,v_1^{p^k})$.  In Definition \ref{padic}, for an element  $\alpha \in \R$, we have defined a p-adic number $\bar{\alpha}$ and its truncation $\bar{\alpha}_k$.  We use the notation $v_2^{-2\alpha}$ to name elements in the limit $\displaystyle\lim_k\pi_*\Sigma^{2\alpha_k}S/(p,v_1^{p^k})$. 
\begin{defn}\label{rem:name}	
	Let $x \in B$ and
		 \begin{equation}
		 \alpha = a_0 + a_1(p^2-1))+ a_2 p(p^2-1) + a_3 p^2(p^2-1) + \cdots.
		 \end{equation}
		 Then for $m,\ell \in \Z$, define an element 
		 $$v_1^m v_2^{\ell-2\alpha} x \in \displaystyle\lim_k H^{*,*}(S^{2\alpha_k}/ p, v_1^{p^k}) = \lim_k y_k$$
		 by setting
		 \begin{equation}
		 y_k = v_1^m v_2^{\ell-a_1-a_2p-\cdots-a_{k}p^{k-1}} \in H^{*,*-2a_0}(S^0/p, v_1^{p^k})
		 \end{equation}
		 and $y_{k+1} \rightarrow y_k$ is given by multiplying $v_2^{a_{k+1}p^k}$.
		 We will denote $\ell-a_1-a_2p-\cdots-a_{k}p^{k-1}$ by $\ell_k$. 
\end{defn}		
		
		We use the following notation to describe the power of $v_1$.
\begin{defn}
	For $k \in \mathbb{N}$, define the map $J_k$ from $\mathbb{Z} \times R \times B$ to subsets of natural numbers $\mathbb{N}$ by
	$$(l, \alpha, x) \mapsto \{j \in \mathbb{N} \mid v_1^j v_2^{\ell_k} x \neq 0 \in H^*(C_0/(v_1^{p^k}),d)\}.$$
\end{defn}
\begin{thm}
Given $\alpha \in \R$, we have a $\mathbb{F}_p$-basis of $\pi_*S^{2\alpha}/p$ as follows
$$\{\zeta^\epsilon v_1^j v_2^{\ell-2\alpha} x \mid \epsilon=0,1, \ell \in \mathbb{Z}, x\in B, j \in \lim_{k\rightarrow \infty} \inf J_k(\ell, \alpha, x)\}.$$
\end{thm}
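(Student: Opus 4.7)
The plan is to compute the inverse limit
\[
\pi_* S^{2\alpha}/p \;\cong\; \lim_k \pi_* \Sigma^{2\alpha_k} S^0/(p, v_1^{p^k})
\]
established at the beginning of Section \ref{computationsection} by feeding in the explicit $\mathbb{F}_p$-bases of each finite stage. By the splitting $H^*(\mathbb{G}_2, (E_2/p)_*) \cong H^*(C_0/(v_1^{p^k}), d) \otimes \Lambda[\zeta]$, every element of the $k$-th stage is uniquely a sum of monomials of the form $\zeta^\epsilon v_1^j v_2^{\ell_k} x$ with $\epsilon \in \{0,1\}$, $x \in B$, and $j$ in the range prescribed by Table \ref{element1}; this range is exactly $J_k(\ell, \alpha, x)$ after substituting $\ell_k = \ell - \bar{\alpha}_k$. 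Because the homotopy fixed point spectral sequence collapses at $E_2$ for degree reasons (already observed in the excerpt), no further differentials or extensions intervene, and the Milnor $\lim^1$ obstruction vanishes by finiteness of each stage.

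The heart of the argument is then an explicit inspection of the transition map $\times v_2^{a_{k+1}p^k} \colon \pi_* \Sigma^{2\alpha_{k+1}} S^0/(p, v_1^{p^{k+1}}) \to \pi_* \Sigma^{2\alpha_k} S^0/(p, v_1^{p^k})$. Read on the chain-complex level, it is the composite of the quotient $C_0/v_1^{p^{k+1}} \twoheadrightarrow C_0/v_1^{p^k}$ with multiplication by $v_2^{a_{k+1}p^k}$. Applied to a monomial $\zeta^\epsilon v_1^j v_2^{\ell_{k+1}} x$ it produces the formal expression $\zeta^\epsilon v_1^j v_2^{\ell_{k+1} + a_{k+1}p^k} x = \zeta^\epsilon v_1^j v_2^{\ell_k} x$; one need only verify that the image is nonzero in the target basis precisely when $j \in J_k(\ell, \alpha, x)$. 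This is immediate for the Coker-type rows of Table \ref{element1}, and for the Ker-type rows one checks that whenever $j$ falls below the lower bound $p^k - b_n$ (or its analogues $p^k - A_n - 2$, $p^k - p + 1$, etc.), the corresponding class lies in the image of $v_1^{p^k}$ in $H^*(C_0, d)$ and therefore vanishes in the cokernel.

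With the transition maps pinned down, a compatible sequence decomposes uniquely into components indexed by fixed $(\epsilon, \ell, x, j)$, and such a component is a well-defined element of the inverse limit if and only if $j \in J_k(\ell, \alpha, x)$ for all sufficiently large $k$, which is exactly the condition $j \in \liminf_{k \to \infty} J_k(\ell, \alpha, x)$. Assigning this compatible system the name $\zeta^\epsilon v_1^j v_2^{\ell - 2\alpha} x$ of Definition \ref{rem:name} produces the claimed $\mathbb{F}_p$-basis of $\pi_* S^{2\alpha}/p$.

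The main obstacle I anticipate is the bookkeeping around the changing value of $\ell_k = \ell - \bar{\alpha}_k$: as $k$ grows, the way in which $\ell_k$ fits into the various indexing forms $sp^n$, $sp^n - p^{n-1}$, $sp^n - p^{n-2}$, $sp^n - \frac{p^n-1}{p-1}$, etc.\ that parametrize the rows of Table \ref{element1} is not constant, since the relevant $s$ and $n$ shift with the $p$-adic digits of $\alpha$. One must verify that for all sufficiently large $k$ the correct row assignment for $v_2^{\ell_k} x$ stabilizes, so that the sets $J_k(\ell, \alpha, x)$ are eventually monotone in $k$ and the $\liminf$ faithfully describes which $j$'s survive in every truncation. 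This stabilization is where the careful distinction between the Coker and Ker halves of Table \ref{element1} is needed, and it is the combinatorial core of the computation that ultimately produces the finiteness criterion of Theorem \ref{thm:mainresult}.
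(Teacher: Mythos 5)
Your proposal is correct and takes essentially the same route as the paper: the paper's own proof simply invokes the identification $\pi_*S^{2\alpha}/p \cong \lim_k \pi_*\Sigma^{2\alpha_k}S^0/(p,v_1^{p^k})$ (with the $\limone$ vanishing and collapsing spectral sequence established at the start of Section \ref{computationsection}) and notes that the basis description is then immediate from the definition of $J_k$. Your extra work tracing the monomial-to-monomial transition maps through the Coker/Ker rows of Table \ref{element1} is exactly the bookkeeping the paper leaves implicit, so there is no gap to report.
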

\begin{proof}
The above gives a $\mathbb{F}_p$-basis of $\displaystyle\lim_k \pi_*\Sigma^{2\alpha_k} S^0/(p,v_1^{p^k})$ by the definition of $J_k$.  The theorem follows from 
$$\pi_*S^{2\alpha}/p=\lim_k \pi_*\Sigma^{2\alpha_k} S^0/(p,v_1^{p^k})$$
as we discussed in the beginning.
\end{proof}
				
		Note that if $\pi_*S^{2\alpha}/p$ is a finitely generated $\mathbb{Z}_p$-module if and only if $(\pi_*S^{2\alpha}/p)/\zeta$ is a finitely generated $\mathbb{Z}_p$-module.  We will drop $\zeta$ in later analysis. 
				
	\begin{defn}\label{defn:stable1}	
	We say an element $(\ell, \alpha, x) \in \mathbb{Z}\times R \times B$ is \emph{stable} if there exists $K \in \Z^+$ and for all $k>K$, we have $J_k(\ell, \alpha, x)=J_K(\ell, \alpha, x)$.  Otherwise, we say $(\ell, \alpha, x)$ is \emph{unstable}.
	
	Given an $\alpha \in \R$, we say an element $v_1^j v_2^{\ell-2\alpha} x \in \pi_*S^{2\alpha}/p$ is \emph{$\alpha$-stable} if $(\ell,\alpha, x)$ is stable.  Otherwise, we will call the element \emph{$\alpha$-unstable}.
	\end{defn}		
	In particular, if $\displaystyle\lim_{k\rightarrow \infty} \inf J_k(\ell, \alpha, x)=\emptyset$, we say the element $(\ell, \alpha, x)$ is stable to trivial.
	\begin{ex}
	When $\ell-\bar{\alpha}=0$, we know $\alpha$ is an integer, then all elements $(\ell, \alpha, x)$ are stable.  When $\ell-\bar{\alpha} = \displaystyle\sum^\infty_{k>0}(p-2)p^k+p-1$, we have interesting $\alpha$-unstable elements.  For example, set $\ell=0$ and $\bar{\alpha}=\displaystyle\sum^\infty_{k\geqslant 0}p^k$.  Then $v_1^jv_2^{-2\alpha}g_1$ and $v_1^jv_2^{-2\alpha}h_0g_1$ are two unstable elements.  Because when $k$ is large, we have $l_k=-\frac{p^k-1}{p-1}$.  Then when $k>0$, we have $J_k(0, \alpha, g_1)=\{\max\{0,p^k-b_k\}=0 \leqslant j \leqslant p^k-1\}$ and $\displaystyle\liminf_{k\rightarrow \infty}J_k(0, \alpha, g_1)=\mathbb{N}$.  This gives an infinite $v_1$-tower on $v_2^{-2\alpha}g_1$. The element $v_1^jv_2^{-2\alpha}h_0g_1$ has similar situations.
	\end{ex}
	
	Because $S^{2\alpha}$ is of finite type if and only if $S^{2\alpha-2a_0}$ is of finite type, we can now assume that $a_0=0$, i.e., $\alpha=(p^2-1)(a_1+a_2p+\cdots)$.
	We begin with a technical numerical lemma.
	\begin{lem}\label{lemma: number}
	Let $\alpha \in \R$ and $\ell \in \mathbb{Z}$.  If $\ell-\bar{\alpha} \neq 0$, then for any $K>0$, there exists $k>K$ such that $\ell_k=sp^n$ where $p\nmid s$ and $n<k$.
	\end{lem}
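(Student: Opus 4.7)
The plan is to prove the lemma by a short $p$-adic valuation argument, exploiting the fact that the integers $\ell_k = \ell - \bar{\alpha}_k$ are the truncations of $\ell - \bar{\alpha} \in \Z_p$.

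First I would unpack the notation: by Definition \ref{rem:name}, $\ell_k = \ell - (a_1 + a_2 p + \cdots + a_k p^{k-1}) = \ell - \bar{\alpha}_k$, which is an element of $\Z$ (possibly negative, possibly zero). By construction $\bar{\alpha} - \bar{\alpha}_k = \sum_{i>k} a_i p^{i-1} \in p^k \Z_p$, so the sequence $\ell_k$ converges $p$-adically to $\ell - \bar{\alpha}$, and in fact $\ell_k \equiv \ell - \bar{\alpha} \pmod{p^k}$.

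Next, I would use the hypothesis that $\ell - \bar{\alpha} \neq 0$ in $\Z_p$ to set $v := v_p(\ell - \bar{\alpha}) < \infty$. For any $k > v$, the congruence above together with $k > v$ forces $v_p(\ell_k) = v_p(\ell - \bar{\alpha}) = v$, since adding an element of valuation $\geqslant k > v$ does not change the valuation. In particular $\ell_k \neq 0$ as an integer (its $p$-adic valuation is finite), so we may write $\ell_k = s p^v$ with $s \in \Z$ and $p \nmid s$.

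Finally, to conclude: given any $K > 0$, choose any $k > \max(K, v)$; then $\ell_k = s p^n$ with $n = v < k$ and $p \nmid s$, as required. I do not anticipate a real obstacle here — the lemma is essentially the statement that a nonzero $p$-adic integer has finite valuation, transported through the identification of $\ell_k$ with a congruence-class representative of $\ell - \bar{\alpha}$ modulo $p^k$. The only care needed is to verify that the error term $\bar{\alpha} - \bar{\alpha}_k$ really does have valuation at least $k$, which follows directly from the definition of $\bar{\alpha}_k$.
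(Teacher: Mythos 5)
Your proof is correct and rests on the same observation as the paper's: since $\ell_k \equiv \ell-\bar{\alpha} \pmod{p^k}$, the nonvanishing of $\ell-\bar{\alpha}$ in $\Zp$ (i.e.\ its finite valuation) forces $\ell_k$ to have the stated form for all large $k$. The paper phrases this contrapositively (if $p^k \mid \ell_k$ for all $k>K$ then $\ell-\bar{\alpha}=0$), while you argue directly via $v_p(\ell-\bar{\alpha})$, which even pins down that the exponent $n$ is the same for every $k>\max(K,v)$; this is a stylistic, not substantive, difference.
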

	\begin{proof}
	We prove this by contradiction. Note that if an integer $a$ is not divisible by $p^k$, then $a=sp^n$ where $p\nmid s$ and $n<k$.  So assume the statement for $\ell_k$ is not true.  Then for all $k>K$, $\ell_k$ is divisible by $p^k$.  This implies that $\ell - \bar{\alpha}= 0$ and is a contradiction to the condition $\ell - \bar{\alpha} \neq 0$.
	\end{proof}
	The key observation is the following lemma.
	\begin{lem}{\label{stable}}
	The subgroup of $\alpha$-unstable elements in $\pi_* S^{2\alpha}/p$ is a finitely generated $\mathbb{F}_p[v_1]$-submodule.
	\end{lem}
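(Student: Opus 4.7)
The plan is to leverage that the $v_1$-action on a basis element $v_1^j v_2^{\ell-2\alpha}x$ only shifts $j\mapsto j+1$ while fixing the pair $(\ell,x)\in\mathbb{Z}\times B$. Thus the lemma will follow from two facts: (a) only finitely many pairs $(\ell,x)$ yield an $\alpha$-unstable basis element of $\pi_* S^{2\alpha}/p$, and (b) each such pair generates a cyclic $\mathbb{F}_p[v_1]$-submodule. Part (b) is immediate from Table \ref{element1}: for each $(\ell,x)$ the set $\liminf_k J_k(\ell,\alpha,x)$ is an interval of $\mathbb{N}$ (either a bounded initial segment or a cofinite tail), so the corresponding submodule is generated by the unique basis element of minimum $v_1$-exponent. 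The real content is (a).

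For (a), I would fix $x\in B$ and catalog the rows of Table \ref{element1} in which the monomial $v_1^j v_2^{\ell_k}x$ (with $\ell_k=\ell-\bar\alpha_k$) can sit. Each Coker row has $J_k$-range $[0,\min\{C(n),p^k-1\}]$ and each Ker row has range $[\max\{0,p^k-D(n)\},p^k-1]$, where $C(n)$ and $D(n)$ are specific polynomials such as $b_n$ or $A_n$, each comparable to $p^n$. For $J_k$ to fail to stabilize in $k$ (the defining condition for unstability, after setting aside the stable-to-trivial case where $\liminf J_k=\emptyset$), the row index $n=n_k$ must grow unboundedly; the estimates on $b_n$ and $A_n$ pin this down to $n_k\geq k-O(1)$. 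So $\alpha$-unstability forces a matching $\ell-\bar\alpha_k=G_r(s_k,n_k)$ --- with the $v_2$-exponent polynomial $G_r$ of some row $r$, and $s_k$ satisfying the side condition in that row --- to hold for infinitely many $k$ with $n_k\to\infty$.

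The main step is to show that each row $r$ admits only finitely many consistent $\ell$. Rearranging as $\ell=\bar\alpha_k+G_r(s_k,n_k)$ and passing to $\mathbb{Z}_p$ (where $\bar\alpha_k\to\bar\alpha$), the $p$-adic tail of $\bar\alpha$ beyond position $k-1$ must equal the tail of $-G_r(s_k,n_k)$. Each row has a characteristic tail pattern: for instance, the $g_1$-Ker entry $G_r=sp^n-\frac{p^{n-1}-1}{p-1}$ produces a run of digits $p-1$ in positions $0,\dots,n-2$ with an $s$-shift at position $n$. So the row, together with the offset $n_k-k$ (bounded by the implicit constant in $n_k\geq k-O(1)$) and the residue class of $s_k\pmod p$, determines the required tail of $\bar\alpha$, and hence pins $\ell$ down to a bounded ambiguity. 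Since $B$ has six elements and each $x$ contributes only a bounded list of rows, (a) follows.

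The main obstacle will be carrying out this $p$-adic tail analysis uniformly across the rows. The $g_1$-Coker entry $G_r=sp^n-p^{n-1}-\frac{p^{n-2}-1}{p-1}$ has a compound exponent whose tail involves several staggered contributions that must be disentangled carefully, and a similar complication arises for the $v_2^{sp^n-p^{n-2}}h_0$ Ker entry. A related subtlety is keeping genuinely unstable elements (where $\liminf J_k$ is nonempty) cleanly separated from the stable-to-trivial ones, so that only the former are counted toward the submodule of interest.
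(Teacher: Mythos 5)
Your proposal is correct in outline and follows essentially the same route as the paper: fix $x\in B$, run through the rows of Table \ref{element1}, and use the $p$-adic form of $\ell-\bar{\alpha}$ (via the ranges governed by $b_n$ and $A_n$) to show that, apart from finitely many exceptional values of $\ell$ per form, every triple $(\ell,\alpha,x)$ either stabilizes or has empty $\liminf$ and so contributes nothing. The only difference is organizational: the paper argues directly after excluding the exceptional patterns $\ell-\bar{\alpha}=0$ and $\ell-\bar{\alpha}=\sum_{k>0}(p-2)p^k+(p-1)$ up front, whereas you run the contrapositive tail-matching argument, and your auxiliary claim that instability forces $n_k\geqslant k-O(1)$ is really only needed for the $Ker$ rows (for the $Coker$ rows unboundedness of $n_k$ already pins down $\ell-\bar{\alpha}$), which does not affect the conclusion.
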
		
		
	\begin{proof}
	By definition \ref{defn:stable1}, we need to show there are only finitely many pair $(\ell,x) \in \mathbb{Z}\times B$ such that $(\ell, \alpha, x)$ is unstable.  Note that there are only 6 elements in $B$.  We discuss case by case on $x \in B$.  We prove that for a fixed $x \in B$, $\alpha \in \R$, there are only finitely many $\ell \in \mathbb{Z}$ such that $(\ell, \alpha, x)$ is unstable.  From now on, we can assume that $\ell + \alpha \neq 0$.  By Lemma \ref{lemma: number}, there exists some $k_0$ such that $p^{k_0}>|\ell|$, and $\ell_{k_0}=sp^n$ where $p\nmid s$, $n<k_0$.
 
	\begin{description}
	\item[Case 1] $x=1$\\
		Since $p^{k_0} \mid \ell_k-\ell_{k_0}$ for $k>k_0$, we have $\ell_k=s_kp^n$ where $p \nmid s_k$. When $k>k_0$, we have $k>n$ and $p^k>b_n$, so $max \{ 0, p^k - b_n \}=p^k-b_n$. Then $J_k(\ell, \alpha, 1)=\{p^k-b_n \leqslant j \leqslant p^k - 1\}$ and $\displaystyle\liminf_{k\rightarrow \infty}J_k(\ell, \alpha, 1)=\emptyset$.
	\item[Case 2] $x=h_0$\\
		\begin{enumerate}
		\item If $s\neq -1$ mod $p$, for $k>k_0$, we have $\ell_k=s_kp^n$ where $s_k \neq 0,\,-1$ mod $p$. In this case, we have $max \{ 0,\, p^k - A_n -2\}=p^k - A_n -2$. Then $J_k(\ell, \alpha, h_0)=\{p^k - A_n -2 \leqslant j \leqslant p^k - 1\}$ and $\displaystyle\liminf_{k\rightarrow \infty}J_k(\ell, \alpha, h_0)=\emptyset$.
		\item If $s=-1$ mod $p$, then we write $s=s'p-1$ and consider two sub cases.
			\begin{enumerate}
				\item If $p \nmid s' + a_{k_0+1}$, then for $k>k_0$, we have $\ell_k=s'_kp^{n+1}-p^n$ where $p \nmid s'_k$ and hence $J_k(\ell, \alpha, 1)=\{0 \leqslant j \leqslant \min\{b_{n+1}-1, p^k-1\}\}$.  Because $k>k_0>n$, we have $\min\{b_{n+1}-1, p^k-1\}=b_{n+1}-1$ which is independent of $k$.  Therefore, $(\ell, \alpha, h_0)$ is stable in this case.\\
				\item If $p \mid s' + a_{k_0+1}$, then for $k>k_0$, we have $\ell_k=s'_kp^{n+2}-p^n$ and $max \{ 0, p^k - (p^n-p^{n-2}+A_{n-2}+2)\}=p^k - (p^n-p^{n-2}+A_{n-2}+2)$. Then $J_k(\ell, \alpha, h_0)=\{p^k - (p^n-p^{n-2}+A_{n-2}+2) \leqslant j \leqslant p^k - 1\}$. In this case, we have $\displaystyle\liminf_{k\rightarrow \infty}J_k(\ell, \alpha, h_0)=\emptyset$.
			\end{enumerate}
		\end{enumerate}
	\item[Case 3] $x=h_1$\\
		\begin{enumerate}
		\item If $n>0$, for $k>k_0$, we have $max \{ 0, p^k-p+1\}=p^k-p+1$ and $J_k(\ell, \alpha, h_1)=\{p^k-p+1 \leqslant j \leqslant p^k - 1\}$.  We have $\displaystyle\liminf_{k\rightarrow \infty}J_k(\ell, \alpha, h_1)=\emptyset$.
		\item If $n=0$, for $k>k_0$, we have $J_k(\ell, \alpha, h_1)=\{0\}$ which is independent of $k$ and $(\ell, \alpha, h_1)$ is stable in this case.
		\end{enumerate}
	\item[Case 4] $x=g_0$\\
		\begin{enumerate}
		\item If $\ell_{k_0}=-1$ mod $p$, we have $\min\{p-2,\,p^k-1\}=p-2$, and then $J_k(\ell, \alpha, g_0)=\{0 \leqslant j \leqslant p-2\}$. Note that $J_k(\ell, \alpha, g_0)$ is independent of $k$ so $(\ell, \alpha, g_0)$ is stable in this case.
		\item If $\ell_{k_0}\neq-1$ mod $p$, then $J_k(\ell, \alpha, g_0)=\{p^k-1\}$ and $\displaystyle\liminf_{k\rightarrow \infty}J_k(\ell, \alpha, g_0)=\emptyset$.
		\end{enumerate}
	In the following Case 5 and Case 6, considering the condition that $\ell-\bar{\alpha} \neq \displaystyle\sum^\infty_{k>0}(p-2)p^k+(p-1)$, we can assume this condition because at most one $\ell \in \mathbb{Z}$ fails this condition.  With this condition, there exists a $k_0$ such that
		$$\ell-\bar{\alpha}=sp^{k_0}-p^{k_0-1}-p^{k_0-2}-\cdots-1~mod~p^{k_0+1}$$
		and $s\neq -1$ mod $p$.
	\item[Case 5] $x=g_1$\\  
		\begin{enumerate}
		\item If $s\neq0$ mod $p$, then for $k>k_0+1$, we have $\ell_k=s_kp^{k_0}-p^{k_0-1}-p^{k_0-2}-\cdots-1$ with $s_k \neq 0,\,-1$ mod $p$, and $\min\{A_{k_0}+1,p^k-1\}=A_{k_0}+1$.  Then $J_k(\ell, \alpha, g_1)=\{0\leqslant j \leqslant A_{k_0}+1 \}$, which is independent of $k$.  Therefore, $(\ell, \alpha, g_1)$ is stable in this case.
		\item If $s=0$ mod $p$ and $s= -p$ mod $p^2$, then for $k>k_0+1$, we have $\ell_k=s_kp^{k_0+2}-p^{k_0+1}-p^{k_0-1}-p^{k_0-2}-\cdots-1$, and $min\{p^{k_0+2}-p^{k_0}+A_{k_0}+1,p^k-1\}=p^{k_0+2}-p^{k_0}+A_{k_0}+1$. Then $J_k(\ell, \alpha, g_1)=\{0\leqslant j \leqslant p^{k_0+2}-p^{k_0}+A_{k_0}+1 \}$, which is independent of $k$.  Therefore, $(\ell, \alpha, g_1)$ is stable in this case.
		\item If $s=0$ mod $p$ and $s\neq -p$ mod $p^2$, then for $k>k_0+1$, we have $\ell_k=s_kp^{k_0+1}-p^{k_0-1}-p^{k_0-2}-\cdots-1$ with $s_k \neq -1$ mod $p$, and $J_k(\ell, \alpha, g_1)=\{\max\{0,\,p^k-b_{k_0+1}\} \leqslant j \leqslant p^k-1\}$. We have $\displaystyle\liminf_{k\rightarrow \infty}J_k(\ell, \alpha, g_1)=\emptyset$.
		\end{enumerate}
	\item[Case 6] $x=h_0g_1$\\
		 For $k>k_0+1$, we have $\ell_k=s_kp^{k_0}-p^{k_0-1}-p^{k_0-2}-\cdots-1$ with $s_k \neq -1$ mod $p$, and $\min\{b_{k_0}-1,p^k-1\}=b_{k_0}-1$. Then $J_k(\ell, \alpha, h_0g_1)=\{0\leqslant j \leqslant b_{k_0}-1 \}$, which is independent of $k$.  Therefore, $(\ell, \alpha, h_0g_1)$ is stable in this case.	
	\end{description}
	\end{proof}	
		
Lemma \ref{stable} reduces the question whether $S^{2\alpha}$ is of finite type to the question whether there are finitely many $\alpha$-stable $\mathbb{F}_p$-generators $v_1^jv_2^{\ell-2\alpha}x$ in a given bidegree $(s,t)$.

		We now divide elements of $H^*(C_0/v_1^{p^k},d)$ into subsets of $v_1$ towers.  First we divide the elements into subsets by rows, that is, we denote the subset of elements in a row by the name in the first column in Table \ref{element1}. For example, the first row in the table is $v_1^j$, $0 \leqslant j \leqslant p^k-1$, then we define a subset $(1,k)$ to be $\{ v_1^j \mid 0 \leqslant j \leqslant p^k-1\}$.  The name $1$ indicates that it consists of $v_1$ towers starting at $1$ and $k$ indicates that these elements are from $S^0/(p,v_1^{p^k})$. Similarly the name for the second row is $(h_0,k)$. For the third row, the subset $(v_2^{sp^n-p^{n-1}}h_0,k)$ can be divided into smaller subsets with respect to $n$. If $n_0$ is an integer, we denote 
		$$\{ x \in (v_2^{sp^n-p^{n-1}}h_0,k) \mid x \mbox{ is of the form } v_1^jv_2^{sp^{n_0}-p^{n_0-1}}h_0 \}$$
		 by $(v_2^{sp^{n_0}-p^{n_0-1}}h_0, k)$.  Then we have 
		 $$(v_2^{sp^n-p^{n-1}}h_0, k) = \overset{\infty}{\underset{m = 1}{\cup}} (v_2^{sp^m-p^{m-1}}h_0, k).$$
		

			\begin{table}[H] 
			\centering
			\caption{Dividing the third row into subrows}
			\label{Dividing}
			\begin{tabular}{| p{3.6cm} | l |p{4cm} | p{5cm}|}
			\hline
 				Names & $Coker$ part		&	&	\\
			\hline
			\hline
				
				$(v_2^{sp^n-p^{n-1}}h_0,k)$ & $v_1^j v_2^{sp^n-p^{n-1}} h_0$	&	$ p \nmid s, ~ n \geqslant 1$	&	$ 0 \leqslant j \leqslant min \{ b_n - 1, p^k - 1 \}$ \\
				\hline
				\hline
			
			$(v_2^{sp^1-1}h_0,k)$ & $v_1^j v_2^{sp-1} h_0$	&	$ p \nmid s$	&	$ 0 \leqslant j \leqslant min \{ b_0 - 1, p^k - 1 \}$ \\
				\hline
						$(v_2^{sp^2-p}h_0,k)$ & $v_1^j v_2^{sp^2-p} h_0$	&	$ p \nmid s$	&	$ 0 \leqslant j \leqslant min \{ b_1 - 1, p^k - 1 \}$ \\
				\hline
				$\cdots$ \\
				\hline
			$(v_2^{sp^{n_0}-p^{n_0-1}}h_0,k)$ & $v_1^j v_2^{sp^{n_0}-p^{n_0-1}} h_0$	&	$ p \nmid s$	&	$ 0 \leqslant j \leqslant min \{ b_{n_0} - 1, p^k - 1 \}$ \\
				\hline
				$\cdots$ \\
				\hline
					
			\end{tabular}
		\end{table}

\begin{ex}		
		The element $v_1^3v_2^{2p-1}h_0$ is in $(v_2^{sp^1-1}h_0,k) \subset (v_2^{sp^n-p^{n-1}},k)$.  This is because $v_1^3v_2^{2p-1}h_0$ belongs to the third row in the $Coker$ part.  The power of $v_2$ is $2p-1=2p^1-p^{1-1}$ so it is in $(v_2^{sp^1-1}h_0,k)$.
\end{ex} 

We have divided elements in $H^*(C_0/v_1^{p^k},d)$ into subsets by rows, and each row may divide into smaller subsets, which we will call subrows.  The length of the $v_1$-tower is determined by which (sub)rows the element lie in.  Now we will group elements in $\pi_* S^{2\alpha}/p$ into (sub)rows in a similar way, which gives an alternative description of $\alpha$-stable.
			
	\begin{defn}\label{defn:stable}	
		Given an $\alpha \in \R$, an element $v_1^m v_2^{\ell-\alpha} x \in \pi_*S^{2\alpha}/p$ is \emph{$\alpha$-stable} if there exists $k_0 \in \Z^+$ and for all $k>k_0$, we have the elements $v_1^m v_2^{\ell-\bar{\alpha}_k}x \in \pi_*S^{2\alpha_k}/p,v_1^{p^k}$ are in the subset $(y,k)$ where $y$ is fixed. Otherwise, we will call the element \emph{$\alpha$-unstable}.
	\end{defn}
	
	It is straightforward to check that Definition \ref{defn:stable} is equivalent to Definition \ref{defn:stable1}.

	\begin{ex}
	Let $\alpha$ be an integer $n \in \R$.  Then all elements $v_1^m v_2^{\ell-2n} x$ are $n$-stable by definition.
	\end{ex}
	
	We are trying to decide if $S^{2\alpha}/p$ is of finite type.  By Lemma \ref{stable}, we need only to focus on stable elements.
	
	\begin{defn}\label{defn:form}
	We define subset $(y)$ of the $\alpha$-stable elements by
	$$(y)=\{x=\lim_{k\rightarrow \infty} x_k \in \pi_*S^{2\alpha}/p \mid x \text{ is $\alpha$-stable, } x_k \in (y,k) ~ when ~k ~is~large\}. $$
	 \end{defn}
	 By the definition of $\alpha$-stable, all stable elements lie in some subsets defined above.
	 \begin{ex}
	 In $\pi_ {-1}S^{2\lambda}$, the element $h_0v_2^{-\lambda}$ is in $(h_0v_2^{sp^1-1})$. The reason is as follows. When $k>1$, we have $h_0v_2^{-\lambda_k}=h_0v_2^{-\frac{p^{k-1}-1}{p-1}p-1}$ in $(h_0v_2^{sp^1-1},k)$.  By Definition \ref{defn:stable}, the element $h_0v_2^{-\lambda}$ is $\lambda$-stable and by Definition \ref{defn:form}, it is in $(h_0v_2^{sp^1-1})$.
	 \end{ex}
	 
	Because the following lemma, we will focus on the $Coker$ part. 	
	\begin{lem}\label{lem:kernel}
	No stable element in the $Ker$ part will survive in the limit.
	\end{lem}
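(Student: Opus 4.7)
The plan is to show that the inverse-system transition map eventually sends the Ker part at level $k+1$ into the Coker part at level $k$, so that a compatible system stable in a fixed Ker row is forced to vanish.

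First, I would compare the two defining short exact sequences
$$0\to C_0\xrightarrow{v_1^{p^j}} C_0 \to C_0/v_1^{p^j}\to 0$$
for $j=k$ and $j=k+1$ under the natural quotient $q$. Since $v_1^{p^{k+1}}=v_1^{p^k}\cdot v_1^{p^{k+1}-p^k}$, the comparison map $q\colon C_0/v_1^{p^{k+1}}\to C_0/v_1^{p^k}$ sits between the identity on one copy of $C_0$ and $v_1^{p^{k+1}-p^k}$ on the other. Naturality of the Bockstein connecting homomorphisms $\partial_j\colon H^s(C_0/v_1^{p^j},d)\to H^{s+1}(C_0,d)$ therefore gives
$$\partial_k\circ q \;=\; v_1^{p^{k+1}-p^k}\cdot \partial_{k+1}.$$

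Second, I would run through the Ker rows in Table \ref{element1}. By the snake-lemma description preceding the table, $\partial$ sends each Ker generator to a $v_1$-torsion class in $H^{*+1}(C_0,d)$ whose annihilator exponent is a constant read off the corresponding differential in Table \ref{diff0} (for instance $b_n$ for $v_1^jv_2^{sp^n}$, or $A_n+2$ for $v_1^jv_2^{sp^n}h_0$). For an element stable in one fixed Ker row this constant is independent of $k$, so once $p^{k+1}-p^k=p^k(p-1)$ exceeds it we obtain $\partial_k\circ q=0$ on that class. By exactness, its image under $q$ lies in $q(H^*(C_0,d))$, which is precisely the Coker part of $H^*(C_0/v_1^{p^k},d)$. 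Since $v_2$-multiplication is induced by a morphism of chain complexes and therefore preserves the Coker part, the full transition $f_{k+1}=v_2^{a_{k+1}p^k}\circ q$ sends the Ker part at level $k+1$ into the Coker part at level $k$ for all $k$ sufficiently large.

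Finally, if $x=\lim_k x_k$ is stable in a Ker row in the sense of Definitions \ref{defn:stable} and \ref{defn:form}, then compatibility gives $x_k=f_{k+1}(x_{k+1})\in \mathrm{Coker}$ for all $k\gg 0$, while by hypothesis $x_k\in \mathrm{Ker}$. Since Table \ref{element1} exhibits the Coker and Ker generators as a joint $\mathbb{F}_p$-basis of $H^*(C_0/v_1^{p^k},d)$, their intersection is $0$ in each bidegree, so $x_k=0$ for all large $k$ and hence $x=0$ in the limit. The main obstacle is the precise chain-level verification of the naturality formula $\partial_k\circ q=v_1^{p^{k+1}-p^k}\partial_{k+1}$, together with the row-by-row bookkeeping confirming that each $v_1$-annihilator bound is indeed a fixed constant once the stable Ker row is selected; the remainder is combinatorial.
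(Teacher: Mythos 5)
Your naturality formula $\partial_k\circ q_*=v_1^{p^{k+1}-p^k}\partial_{k+1}$ is correct, and the row-by-row bound on the $v_1$-annihilator of $\partial_{k+1}$ of a Ker generator does stabilize (for a fixed limit name the $p$-adic valuation of the $v_2$-exponent is eventually constant), so most of your outline can be made to work. The genuine gap is the step ``$v_2$-multiplication is induced by a morphism of chain complexes and therefore preserves the Coker part.'' Multiplication by $v_2^{a_{k+1}p^k}$ is \emph{not} a self-map of $C_0$ (equivalently, it is not a $\mathbb{G}_2$-equivariant endomorphism of $E_*/p$): the class $v_2^{p^k}$ is invariant only modulo $(p,v_1^{p^k})$, which is exactly why the $v_2^{p^k}$-self maps exist on $S^0/(p,v_1^{p^k})$ and not on $S^0/p$. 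Hence there is no commuting square with $H^*(C_0,d)$, the connecting map $\partial_k$ is not linear over the class $v_2^{a_{k+1}p^k}\in H^0(C_0/v_1^{p^k},d)$, and preservation of the image of $H^*(C_0,d)$ is not formal. It is true in this case, but only because of the explicit description of the transition map on the named generators of Table \ref{element1} (translation of the $v_2$-exponent by $a_{k+1}p^k$, which permutes Coker names among themselves); you would have to invoke that computation to close the argument.

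Once you allow yourself that name-level description, the detour through $\partial$-naturality is unnecessary, and this is where your route diverges from the paper. The paper's proof is a direct observation about the $v_1$-exponent ranges in the Ker rows of Table \ref{element1}: every Ker generator at level $k$ has $v_1$-exponent in $[\max\{0,\,p^k-c\},\,p^k-1]$ for a constant $c$ read off Table \ref{diff0} (e.g.\ $c=b_n$ for $v_1^jv_2^{sp^n}$), so for a fixed name $v_1^j v_2^{\ell_k}x$ the class is zero as soon as $p^k-c>j$; the ranges at consecutive levels are disjoint for $k\gg 0$, so no compatible nonzero family in a Ker row exists and nothing survives the limit. In short: your approach is salvageable but its key ``Coker-preservation'' step is misjustified as stated, and the repair reduces it to the same elementary range argument the paper uses, in the sense of Definitions \ref{defn:stable} and \ref{defn:form} and the $\liminf J_k=\emptyset$ criterion.
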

		
	\begin{proof}
		We can check this row by row. The results in the table \ref{element1} shows that for any element in the $Ker$ part, its $v_1$ tower's range does not overlap when $k$ goes to infinity.  For example, non trivial elements in the second row of the $Ker$ part, of the form $v_1^jv_2^{sp^n}$, has $j$ in the range $max \{ 0, p^k - b_n \} \leqslant j \leqslant p^k - 1$.  When $k>>n$, the range is $A(k) \coloneqq  \{ j \in \Z \mid 0, p^k - b_n \leqslant j \leqslant p^k - 1 \}$, and $A(k) \cap A(k+1) = 0$, so no element survives.	
	\end{proof}

	If $\pi_* S^{2\alpha}$ is not finite at some stem, then there exists a bidegree $(s,t)$ such that $H^{s,t}(S^{2\alpha})$ is not finite. If this happens, because there are only finitely many subsets, there must be a subset so that infinitely many elements of this subset survive in the limit at this bidegree.

We check each row for the possibility to have infinitely many elements survive in the limit at some bidegree.  Some forms can be easily excluded.
	
	\begin{lem}\label{lem:form}
		There are only finitely many elements in the subsets $(1)$, $(h_0)$, $(v_2^sh_1)$, $(v_2^{sp-1}g_0)$ that survive at a fixed degree $(s_0,t_0)$.
	\end{lem}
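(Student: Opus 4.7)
The idea is to exploit that the bidegree of the formal generator $v_1^m v_2^{\ell-2\alpha}x$ of $\pi_*S^{2\alpha}/p$ is
\[
(s_x,\, 2m(p-1) + 2\ell(p^2-1) + t_x + 2a_0),
\]
independent of $k$, because the shifts $\Sigma^{2\alpha_k}$ and the $v_2^{a_{k+1}p^k}$-maps in the inverse system of Definition \ref{exp} exactly cancel (one checks $\alpha_{k+1} - \alpha_k = (p^2-1)a_{k+1}p^k$). I will treat the four subsets in turn and show that fixing $(s_0,t_0)$ leaves only finitely many admissible pairs $(m,\ell)$; up to a factor of two from the $\zeta$-summand this will give the lemma.

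For $(v_2^sh_1)$ and $(v_2^{sp-1}g_0)$, the $v_1$-exponent is bounded uniformly in $k$ by the ranges recorded in Table \ref{element1}: $m=0$ in the first case, and $0 \le m \le p-2$ in the second once $k\ge 1$. For each admissible value of $m$ the equation
\[
2m(p-1) + 2\ell(p^2-1) + t_x + 2a_0 = t_0
\]
pins $\ell$ down to at most one value, so the counts are at most $1$ and at most $p-1$ respectively.

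The subsets $(1)$ and $(h_0)$ are the delicate cases, because the ranges $0\le j \le p^k-1$ in Table \ref{element1} grow with $k$ and $m$ is not a priori bounded. The key observation is that the rows $(1,k)$ and $(h_0,k)$ consist precisely of elements $v_1^j$ and $v_1^j h_0$ whose $v_2$-exponent is literally $0$. By Definition \ref{defn:stable}, any $\alpha$-stable element of $(1)$ or $(h_0)$ therefore has $\ell_k = 0$ for all large $k$, i.e.\ $\ell = \bar{\alpha}$ in $\R$. Since $\ell \in \Z$, this is possible only when $\bar{\alpha}$ is a non-negative integer, and in that case $\ell$ is uniquely pinned down; the bidegree equation then forces $m$ to be unique as well, giving at most one surviving element in each of these subsets at any bidegree.

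The main obstacle is really just the bookkeeping in the last paragraph: stability must be shown to force the formal $v_2$-exponent to be literally $0$, not merely divisible by $p^k$ (which would not be enough to match a cohomology class in $(1,k)$ or $(h_0,k)$). Once this is unwound from the definitions, all four cases reduce to a one-variable linear Diophantine problem with a finite solution set.
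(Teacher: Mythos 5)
Your proposal is correct and follows essentially the same route as the paper: the paper's (very terse) proof also reads the $v_1$-ranges off Table \ref{element1} and notes that a bounded tower length plus a fixed bidegree leaves only finitely many possibilities, citing $(v_2^sh_1)$ as the model case. Your additional care with $(1)$ and $(h_0)$ — that membership in those rows forces the $v_2$-exponent to be literally zero, so $\ell$ and then $m$ are pinned down by the degree equation — is a welcome completion of a point the paper leaves implicit, but it is not a different argument.
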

	
	\begin{proof}
	From the table \ref{element1}, if the length of the $v_1$ tower in this form is bounded by a finite number, then there are only finite elements of a fixed bidegree. For example, for elements in $(v_2^sh_1)$, the length of the $v_1$ tower is bounded by $1$.
	\end{proof}

	Now we will focus on the rest rows that possibly have infinite length $v_1$ towers. By the Lemma \ref{lem:form} and Lemma \ref{lem:kernel}, there are only four possible rows: $(v_2^{sp^n-p^{n-1}}h_0)$, $(v_2^{sp^n-\frac{p^n-1}{p-1}}g_1)$, $(v_2^{sp^n-p^{n-1}-\frac{p^n-2}{p-1}}g_1)$ and $(v_2^{sp^n-\frac{p^n-1}{p-1}}h_0g_1)$. We shall analyse, row by row, the conditions on $\alpha$ such that at some $(s,t)$, there are infinitely many elements of one of the four rows that survive in the limit. Since shifting by an integer degree will not change the property of finiteness, we assume $a_0=0$ from now on.
	
	
	In each case, the question of whether $\pi_k S^{2\alpha}/p$ has infinitely many elements in a certain subset at some stem $k$ reduces to an elementary numerical question.  
	
	
	We shall start with the row $(v_2^{sp^n-p^{n-1}}h_0)$.

\begin{thm}{\label{case1}}
		If there are infinitely many elements in $H^{1,2t(p-1)}(E_2 S^{\alpha}/p)$ in $(v_2^{sp^n-p^{n-1}}h_0)$ then in the expansion of $\alpha$, infinitely many $a_i$s are nonzero and infinitely many $a_i$s are zero.  Moreover, the converse statement is also true.
	\end{thm}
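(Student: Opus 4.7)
The plan is to translate the statement into a purely numerical question about the $p$-adic digits of $\ell - \bar{\alpha}$. An element $v_1^j v_2^{\ell - 2\alpha} h_0$ contributes to $H^{1, 2t(p-1)}$ precisely when the bidegree constraint $j + \ell(p+1) = t - 1$ holds; it lies in the row $(v_2^{sp^n - p^{n-1}} h_0)$ exactly when, viewing $\ell - \bar{\alpha}$ as a $p$-adic integer, one has $\ell - \bar{\alpha} = sp^n - p^{n-1}$ for some $s \in \Zp^\times$, equivalently, the base-$p$ expansion of $\ell - \bar{\alpha}$ reads $(0, \ldots, 0, p-1, c, \ldots)$ with $p-1$ at position $n-1$ and a digit $c \ne p-1$ at position $n$. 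The $v_1$-tower cap $0 \le j \le b_n - 1$ is equivalent to $\ell$ lying in an interval of length $(b_n - 1)/(p+1) \sim p^{n-1}$.

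The first step is to note that for each fixed $n$ the digit condition cuts out $p-1$ residue classes modulo $p^{n+1}$, while the bidegree interval has length $\sim p^{n-1} < p^{n+1}$, so each class is hit at most once; hence for each $n$ there are $O(1)$ valid $\ell$ and ``infinitely many elements at bidegree $(1, 2t(p-1))$'' is equivalent to saying that infinitely many levels $n$ admit a valid $\ell$. The heart of the argument is then to convert existence of a valid $\ell$ at level $n$ into a digit constraint on $\bar{\alpha}$. I would split the long subtraction $\ell - \bar{\alpha}$ into a ``no-borrow'' case (requires $a_n = 0$ with the digit of $\ell$ at position $n-1$ equal to $p-1$) and a ``borrow'' case (requires $a_n \ge 1$, digit of $\ell$ at position $n-1$ equal to $a_n - 1$, with a borrow propagating upward), and express the subsequent condition ``digit $c$ at position $n$ is $\ne p-1$'' as a constraint relating the digit of $\ell$ at position $n$ with $a_{n+1}$.

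For the forward direction I would proceed by contrapositive. If $a_i = 0$ for all $i > N$, then $\bar{\alpha}$ is a fixed non-negative integer, $\ell - \bar{\alpha}$ is an ordinary integer, and the ``low digits vanish'' requirement forces $\ell = \bar{\alpha} - p^{n-1} + sp^n$ with $s \in \Z$; then the bidegree interval, whose upper end is fixed at $(t-1)/(p+1)$ but which is probed at scale $p^n$, is missed for all but finitely many $n$. If $a_i \ne 0$ for all $i > N$, the no-borrow branch is unavailable for every $n > N$; the borrow branch demands a chained digit pattern that I would show by induction on $n$ eventually contradicts either the ``$c \ne p-1$'' constraint or the bidegree interval. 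For the converse, assuming both infinitely many zero and nonzero coefficients, I would pick a sparse subsequence $n_1 < n_2 < \cdots$ with $a_{n_k} = 0$ and $a_{n_k + 1} \ne 0$, apply the no-borrow construction to obtain a $(p-1)$-fold family of candidates $\ell_{n_k}$, and then tune the free high-order digits so that infinitely many resulting triples land at a single value of $t = 1 + j_{n_k} + \ell_{n_k}(p+1)$. The main obstacle is this last synchronization step: I expect to handle it by a pigeonhole argument on residues of $\ell_{n_k}(p+1)$ modulo a suitable modulus growing with $b_n$, so that sufficiently many $n_k$ produce the same $t$.
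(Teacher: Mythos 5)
Your translation into digit conditions on $\ell-\bar{\alpha}$, and the observation that each level $n$ contributes at most $O(1)$ valid $\ell$ at a fixed bidegree, agree with the paper's setup; the gap is at the decisive steps. For the converse there is no synchronization problem to be solved, and the pigeonhole you propose could not solve it anyway: choosing residues modulo a modulus growing with $n$ yields only finitely many coincidences (``sufficiently many $n_k$''), whereas the statement needs infinitely many elements in a single group $H^{1,2t(p-1)}$. The move your plan is missing is to fix $t$ first: with $\ell_0$ the integer in $(\frac{t-1}{p+1}-1,\frac{t-1}{p+1}]$ and $j_0=t-1-(p+1)\ell_0$, every candidate at this bidegree has the form $v_1^{(p+1)\ell_m+j_0}v_2^{\ell_0-\ell_m-\bar{\alpha}}h_0$, so the bidegree is pinned in advance and there is nothing to tune (there are no ``free high-order digits'': $\alpha$ is given, and $\ell$ is an integer determined by the constraints). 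At a level $n$ with $a_n=0$, $a_{n+1}\neq 0$ and $n$ large, the row congruence together with the cap $0\leqslant j\leqslant b_n-1$ forces the single admissible choice $\ell_m=p^{n-1}+\ell_0-\bar{\alpha}_n$, which one checks satisfies all conditions; hence every sufficiently large good level contributes an element at the same prescribed bidegree, and that is the whole converse.

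In the forward direction your two cases are the right ones, but the reasons given do not hold up. When $a_i=0$ for all $i>N$, so $\bar{\alpha}$ is an integer, the unique candidate at level $n$ is $\ell=\bar{\alpha}-p^{n-1}$, and whenever $\bar{\alpha}\geqslant \ell_0$ it does lie in the bidegree window; what kills it is the digit condition at position $n$, since $-p^{n-1}$ has digit $p-1$ there (equivalently $a_{n+1}=0$ contradicts $p\nmid s$), so ``the interval is missed'' is not the correct mechanism. In the case $a_i\neq 0$ for all $i>N$ you offer only an unspecified induction on a chained digit pattern; the actual mechanism is that the congruence plus $0<\ell_m\leqslant p^{n-1}$ force the exact identity $\ell_m=p^{n-1}+\ell_0-\bar{\alpha}_n$, whence $\bar{\alpha}_n<p^{n-1}+\ell_0$ and thus $a_n=0$ for $n$ large, a contradiction. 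Note also that your borrow branch cannot simply be excluded: if $\ell_0<0$ and the digits below $n$ are almost all $p-1$ while $a_{n+1}\neq p-1$, the borrow branch does produce a valid element at that level; one must show (as one can) that this occurs at most once per bidegree, so it only changes the count by $O(1)$. As written, both directions are missing the computation that actually carries the proof.
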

	
	\begin{proof}
		Let $\ell_0$ be the unique integer in $(\frac{t-1}{p+1}-1,\frac{t-1}{p+1}]$.  Then $|v_2^{\ell_0}h_0| \leqslant 2t(p-1) < |v_2^{\ell_0+1}h_0|$.  At the bidegree $(1,2t(p-1))$, elements with the base $h_0$ are $v_1^{(p+1)\ell+j_0} v_2^{\ell_0-\ell - \alpha} h_0$ where $\ell \in \mathbb{N}$ and $j_0=t-1-(p+1)\ell_0$.  By the assumption, we can assign infinitely many $m \in \Z^+$ a distinct nonzero element $v_1^{(p+1)\ell_m+j_0} v_2^{\ell_0-\ell_m - \alpha} h_0$ in $(v_2^{sp^n_m-p^{n_m-1}}h_0) \subset (v_2^{sp^n-p^{n-1}}h_0)$ where $n_m \in \Z$.  We can assume that if $m>m'$, then $\ell_m>\ell_{m'}$.  This gives following two restrictions on $\ell_m$ and $\alpha$.
		 \begin{enumerate}
		 	\item $\ell_0-\ell_m - \alpha \equiv s_m p^{n_m}-p^{n_m-1}$ mod $p^{n_m+1}$ with $p \nmid s_m$
		 	\item $(p+1)\ell_m+j_0 \leqslant p^{n_m-1}(p+1)-2$
		  \end{enumerate} 
		
		  The the first restriction comes from the assumption that those elements are in $(v_2^{sp^n_m-p^{n_m-1}}h_0)$, that is, the power of $v_2$ is of the form $sp^n_m-p^{n_m-1}$; the second restriction comes from the bound on the length of $v_1$ towers: since nontrivial elements in  $(v_2^{sp^n_m-p^{n_m-1}}h_0)$ are $\{v_1^jv_2^{sp^n_m-p^{n_m-1}}h_0\}$ with $0 \leqslant j \leqslant p^{n_m-1}(p+1)-2$, if $v_1^{(p+1)\ell_m+j_0} v_2^{\ell_0-\ell_m - \alpha} h_0 \neq 0$, then we have the second restriction.  From the assumption, we have $\ell_m>0$; from the second restriction, we have $l_m \leqslant p^{n_m-1}$.  The first restriction tells us
		  
		  $$\ell_m  + \alpha_{n_m} = p^{n_m-1}+\ell_0$$
		  
		  Plugging $\ell_m = p^{n_m-1} + \ell_0 - \alpha_{n_m}$ into the restriction $0< \ell_m < p^{n_m-1}$, we have $\alpha_{n_m}<p^{n_m-1}+\ell_0$.  There exists $n_M>log_p\ell_0+1$.  The condition $p \nmid s_m$ shows that $a_{n_M+1} \neq 0$.  Recall that $\alpha_{n_m} = \displaystyle\sum^{n_m}_{i=1} a_i p^{i-1}$ and $0 \leqslant a_i < p$.  When $m>M$,  $p^{n_M}+a_{n_m}p^{n_m-1} \leqslant \alpha_{n_m}<p^{n_m-1}+\ell_0$ is equivalent to $a_{n_m} = 0$.  The condition $p \nmid s_m$ shows that $a_{n_m+1} \neq 0$. So the two restrictions are equivalent to $a_{n_m} = 0$ and $a_{n_m+1} \neq 0$.  Hence, for each $m>M \in \Z^+$, we have $a_{n_m} = 0$ and $a_{n_m+1} \neq 0$, and there are infinitely many are nonzero coefficients and there are infinitely many zero coefficients in the $\R$-expansion of $\alpha$.
		  
The above proof shows that the converse statement is true.  In fact, if there are infinitely many $a_i$s are nonzero and there are infinite $a_i$s are zero, then we can assign each $m \in \Z^+$ a different number $n_m \in \Z^+$ such that $a_{n_m}=0$ and $a_{n_m+1} \neq 0$. Let $l_m = p^{n_m-1} - \alpha_{n_m}+\ell_0$, then check the table, we will have $v_1^{(p+1)\ell_m+j_0} v_2^{\ell_0-\ell_m - \alpha} h_0$ are survived elements.  Therefore, we have infinitely many elements in $(v_2^{sp^n-p^{n-1}}h_0) \subset H^{1,2t(p-1)}(E_2 S^{2\alpha}/p)$.
	\end{proof}

	
	
	Theorem \ref{case1} shows that if $S^{2\alpha}$ is not of finite type, then the homotopy groups are not finitely generated in all possible nontrivial degrees.  We state it as Corollary \ref{cor:infinite}
	 \begin{cor}\label{cor:infinite}
	If there are infinitely many elements in $H^{1,t_0}(E_2 S^{2\alpha}/p)$ in $(v_2^{sp^n-p^{n-1}}h_0)$ at some bidegree $(1, t_0)$, then at all bidegree $(1, 2t(p-1))$ $(t \in \Z)$ there are infinitely many elements in $H^{1,t_0+2k(p^2-1)}(E_2 S^{2\alpha}/p)$ of the form $v_2^{sp^n-p^{n-1}}h_0$.
	\end{cor}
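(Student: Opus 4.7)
The plan is to apply \Cref{case1} twice, observing that the numerical condition it produces depends only on $\alpha$ and not on the bidegree. \Cref{case1} is an ``if and only if'' statement: it asserts that the existence of infinitely many elements of the form $v_2^{sp^n-p^{n-1}}h_0$ in $H^{1,2t(p-1)}(E_2 S^{2\alpha}/p)$ is equivalent to the $\R$-expansion of $\alpha$ having both infinitely many zero entries and infinitely many nonzero entries. Because this characterization makes no reference to $t$, it must transfer uniformly to all bidegrees of the indicated shape.

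First I would observe that, since $E_2^{s,t} = 0$ whenever $2(p-1) \nmid t$, any bidegree $(1,t_0)$ at which the hypothesis can even be nontrivial must have the form $t_0 = 2t_1(p-1)$ for some $t_1 \in \Z$. Applying the forward direction of \Cref{case1} with $t = t_1$ to the hypothesis, I extract the coefficient condition on $\alpha$: infinitely many $a_i$ are zero and infinitely many are nonzero. Next, for any $t \in \Z$ I would apply the converse direction of \Cref{case1}: since $\alpha$ satisfies the coefficient condition, the bidegree $(1, 2t(p-1))$ must also contain infinitely many elements of the form $v_2^{sp^n-p^{n-1}}h_0$ in $H^{1,2t(p-1)}(E_2 S^{2\alpha}/p)$. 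Since $t$ was arbitrary, the conclusion follows.

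There is essentially no obstacle here; the content of the corollary is exactly the $t$-independence already embedded in \Cref{case1}. The only minor bookkeeping is matching the two parametrizations of the stem used in the statement --- namely identifying $(1, 2t(p-1))$ with $(1, t_0 + 2k(p^2-1))$ --- which reduces to the algebraic identity $2(t_1 + k(p+1))(p-1) = t_0 + 2k(p^2-1)$ after setting $t = t_1 + k(p+1)$, so this can be dispatched in a single line.
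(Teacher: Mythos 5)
Your proposal is correct and matches the paper's intent exactly: the paper offers no separate argument for this corollary, presenting it as an immediate restatement of Theorem \ref{case1}, whose coefficient condition on $\alpha$ is independent of $t$, which is precisely the two-directional application you spell out.
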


	 With the same approach, one could do with the elements of other forms. We state the results as follows.

	 \begin{thm}
		If there are infinitely many elements in 
		$$(v_2^{sp^n-\frac{p^{n-1}-1}{p-1}}g_1), ~or$$
		$$(v_2^{sp^n- p^{n-1} - \frac{p^{n-2}-1}{p-1}}g_1), ~or$$
		$$(v_2^{sp^n- \frac{p^n-1}{p-1}}h_0g_1)$$
		at some bidegree $(s,t)$, then in the $\R$-expansion coefficients of $\alpha$, infinitely many $a_i$s are nonzero and infinitely many $a_i$s are zero.  Moreover, the converse statement is also true.
	\end{thm}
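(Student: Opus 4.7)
The strategy is to mimic Theorem \ref{case1} verbatim, performing its $\ell$-elimination argument once for each of the three listed subsets. In each case the underlying game is the same: fix a bidegree $(s,t)$, list the candidate classes of the given form that live there, and ask when infinitely many such classes can be simultaneously non-vanishing; then translate the resulting numerical conditions into digit conditions on the $\R$-expansion of $\alpha$.

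Concretely, for a fixed form $x \in \{g_1, h_0 g_1\}$ and a fixed bidegree $(s,t)$, I would write the candidates as $v_1^{j_0 + c\ell} v_2^{\ell_0 - \ell - \alpha} x$, where $\ell_0$ is the integer part of the appropriate ratio determined by $t$ and the internal degree of $x$, and $c, j_0$ are the $v_2$-period and residual $v_1$-offset computed from the bidegree. Non-vanishing in the row indexed by $n$ imposes two restrictions, both read off of Table \ref{element1}: a congruence on $\ell_0 - \ell - \alpha$ modulo a suitable power of $p$ that matches the prescribed shape of the $v_2$-exponent (for instance, $\equiv sp^n - \frac{p^{n-1}-1}{p-1} \pmod{p^{n+1}}$ with $p \nmid s$ for the first row, and the analogous conditions for the other two), and a $v_1$-tower length bound given respectively by $A_n + 1$, by $p^n - p^{n-2} + A_{n-2} + 1$, and by $b_n - 1$. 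Eliminating $\ell$ between these two constraints, exactly as in Theorem \ref{case1}, forces the partial sum $\alpha_{n_m}$ to lie within a bounded distance (depending only on $\ell_0$) of a prescribed residue modulo a power of $p$. Having infinitely many witnesses $n_m$ then pins down a pattern on two or three consecutive coefficients of the $\R$-expansion — roughly, a prescribed digit must vanish while an adjacent one must be nonzero — yielding infinitely many zero and infinitely many nonzero $a_i$'s. The converse in each case is constructive: given such an $\alpha$, pick the witness indices $n_m$ from the zero/nonzero pattern and define $\ell_m$ explicitly from the congruence so that both restrictions are satisfied.

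The main obstacle I expect is the middle row $(v_2^{sp^n - p^{n-1} - \tfrac{p^{n-2}-1}{p-1}} g_1)$, where the prescribed $v_2$-exponent mixes two separate $p$-adic corrections and the length bound $p^n - p^{n-2} + A_{n-2} + 1$ is not a single geometric interval; there one has to track how carrying across the digits $a_{n-2}, a_{n-1}, a_n$ interacts, and the digit pattern witnessing unboundedness is slightly more intricate than the clean ``$a_n = 0$, $a_{n+1} \neq 0$'' condition of Theorem \ref{case1}. The $h_0 g_1$ case, by contrast, should be essentially a reprint of Theorem \ref{case1}, since the same $b_n$ bound appears, and the first $g_1$ case differs only in replacing $b_n$ by $A_n + 1$ together with the additional residue constraint $s \not\equiv 0, -1 \pmod{p}$, which is absorbed without difficulty.
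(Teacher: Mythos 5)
Your proposal is correct and follows essentially the same route as the paper: the paper gives no separate argument for these three rows, stating only that they are handled ``with the same approach'' as Theorem \ref{case1}, i.e.\ by repeating the $\ell$-elimination argument with the congruence conditions and $v_1$-tower length bounds ($A_n+1$, $p^n-p^{n-2}+A_{n-2}+1$, $b_n-1$) read off from Table \ref{element1}, which is exactly what you describe.
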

	
	Summing up all the cases together, we have the main theorem as follows.
		
	\begin{thm}\label{condition}
		If there are infinitely many elements in $H^{s,t_0}(E_2 S^{2\alpha}/p)$ at some bidegree $(s, t_0)$, then in the expansion of $\alpha$, infinitely many $a_i$s are nonzero and infinitely many $a_i$s are zero.  Moreover, the converse statement is also true.
	\end{thm}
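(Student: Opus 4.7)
The plan is to assemble Theorem \ref{condition} by combining the four row-specific results already established with the structural lemmas of this section. Since shifting by an integer does not change either the homotopy groups or the $\R$-expansion condition (an integer shift only alters $a_0$), I would first reduce to the case $a_0=0$. The goal is then to show that infiniteness of $H^{s,t_0}$ at some bidegree is equivalent to the $\R$-expansion of $\alpha$ containing both infinitely many zero coefficients and infinitely many nonzero coefficients.

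First I would invoke Lemma \ref{stable}: the $\alpha$-unstable elements form a finitely generated $\mathbb{F}_p[v_1]$-submodule of $\pi_*S^{2\alpha}/p$, so at any fixed bidegree $(s,t_0)$ they contribute only finitely many classes. Therefore an infinite collection at $(s,t_0)$ must contain infinitely many $\alpha$-stable elements. Next, Lemma \ref{lem:kernel} eliminates the Ker part of Table \ref{element1} entirely, and Lemma \ref{lem:form} eliminates the Coker rows $(1)$, $(h_0)$, $(v_2^sh_1)$, $(v_2^{sp-1}g_0)$ because each has $v_1$-tower length bounded by a fixed integer, yielding only finitely many classes at each fixed bidegree. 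This narrows the possibilities to the four rows with unbounded $v_1$-tower lengths, namely $(v_2^{sp^n-p^{n-1}}h_0)$, $(v_2^{sp^n-\frac{p^{n-1}-1}{p-1}}g_1)$, $(v_2^{sp^n-p^{n-1}-\frac{p^{n-2}-1}{p-1}}g_1)$, and $(v_2^{sp^n-\frac{p^n-1}{p-1}}h_0g_1)$.

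For the forward implication, I would apply Theorem \ref{case1} to the first row and the theorem immediately preceding to the other three: in each case, an infinite collection at a single bidegree forces the $\R$-expansion of $\alpha$ to have both infinitely many zero and infinitely many nonzero coefficients. Since any infinite collection in $H^{s,t_0}$ must, after discarding the finitely many unstable and the finitely many Ker/short-tower contributions, concentrate in at least one of those four rows, the conclusion follows. For the converse, the explicit constructions appearing in the proof of Theorem \ref{case1} (setting $\ell_m = p^{n_m-1}-\alpha_{n_m}+\ell_0$ for indices $n_m$ with $a_{n_m}=0$ and $a_{n_m+1}\neq 0$) and in the analogous constructions for the $g_1$ and $h_0g_1$ rows directly exhibit infinite families of surviving stable elements at suitable bidegrees.

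The potential obstacle, and really the only bookkeeping check, is confirming that the exceptional conditions imposed in the case analysis of Lemma \ref{stable} (for instance, the restriction that $\ell-\bar\alpha\neq\sum_{k>0}(p-2)p^k+(p-1)$ used for Cases 5 and 6) cannot themselves account for an infinite family at a fixed bidegree; but that restriction excludes at most one value of $\ell$ per row and hence only finitely many classes at any $(s,t_0)$. With this verified, the combination is purely formal and Theorem \ref{condition} follows.
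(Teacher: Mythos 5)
Your proposal matches the paper's own (implicit) argument: the paper obtains Theorem \ref{condition} precisely by "summing up all the cases," i.e., combining Lemma \ref{stable}, Lemma \ref{lem:kernel}, Lemma \ref{lem:form}, Theorem \ref{case1}, and its analogue for the three $g_1$/$h_0g_1$ rows, exactly as you do. Your added check that the exceptional value of $\ell$ in Cases 5 and 6 of Lemma \ref{stable} contributes only finitely many classes is a reasonable piece of bookkeeping that the paper leaves tacit, but it does not change the route.
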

	 
 	Theorem \ref{reduce} and Theorem \ref{condition} answer the Question \ref{question}. 
	\begin{thm}\label{result}
	For any $X \in Pic^0_{K(2)}$, let $e_i(X)$ be the $i$\textsuperscript{th} coefficient in the $\R$-expansion of $e(X)$.  Then $\pi_kX$ is finitely generated for all degrees $k \in \Z$ if and only if either only finitely many $e_i(X)$'s are zeros, or only finitely many $e_i(X)$'s are nonzeros.
	\end{thm}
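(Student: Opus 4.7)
The plan is to put together the reduction to $S^{2e(X)}/p$ established earlier with the numerical characterization coming from the row-by-row analysis of Section~\ref{computationsection}. Concretely, I will argue as follows.

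First, I invoke Proposition~\ref{fg}: $\pi_k X$ is finitely generated as a $\Zp$-module for all $k \in \Z$ if and only if $\pi_k(X/p)$ is finite (equivalently, a finite-dimensional $\mathbb{F}_p$-vector space) for every $k \in \Z$. Next, by Theorem~\ref{thm:modulep}, the smash $X/p$ depends on $X \in Pic^0_{K(2)}$ only through $e(X) \in \R$, and in fact $X/p \simeq S^{2e(X)}/p$. Combining these, the question whether $X$ is of finite type becomes the purely numerical question whether $\pi_k(S^{2e(X)}/p)$ is a finite $\mathbb{F}_p$-vector space for every $k$; this is exactly the content of Corollary~\ref{reduce}.

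Second, I appeal to Theorem~\ref{condition}. With $\alpha = e(X)$, that theorem asserts that there exists some bidegree $(s,t_0)$ at which $H^{s,t_0}(E_2 S^{2\alpha}/p)$ contains infinitely many $\mathbb{F}_p$-generators if and only if the $\R$-expansion coefficients $\{a_i\}$ of $\alpha$ contain both infinitely many zeros and infinitely many nonzeros. Since the homotopy fixed point spectral sequence for $S^{2\alpha}/p$ has no room for differentials or extensions (a point already exploited in Section~\ref{computationsection}), finiteness of $\pi_k(S^{2\alpha}/p)$ in every degree is equivalent to finiteness of $H^{s,t}$ in every bidegree; and by Lemma~\ref{stable} only the $\alpha$-stable part, analyzed in Theorem~\ref{condition} and its companion statements, can prevent this finiteness.

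Taking the contrapositive of Theorem~\ref{condition}, $\pi_*(S^{2\alpha}/p)$ is finite in every degree precisely when the $\R$-expansion coefficients $\{a_i\} = \{e_i(X)\}$ do \emph{not} have both infinitely many zeros and infinitely many nonzeros, i.e.\ when either only finitely many $e_i(X)$ are zero or only finitely many are nonzero. Chaining this with the first paragraph gives the desired equivalence, establishing Theorem~\ref{result}.

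The routine part is just assembling the quoted results; the only point requiring care is the passage from ``$\pi_k X/p$ finite for every $k$'' to ``$\pi_k X$ finitely generated for every $k$'', which I expect to be the main conceptual obstacle but has already been handled in Proposition~\ref{fg} via the inverse system $\{X/p^i\}$ and the vanishing of its $\limone$. Everything else in the proof is a direct translation between the numerical finiteness condition on the $\R$-expansion of $e(X)$ and the cohomological finiteness at each bidegree.
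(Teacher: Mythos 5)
Your argument is correct and follows exactly the paper's own route: reduce to $S^{2e(X)}/p$ via Proposition~\ref{fg}, Theorem~\ref{thm:modulep}, and Corollary~\ref{reduce}, then apply the contrapositive of Theorem~\ref{condition}, using the collapse of the homotopy fixed point spectral sequence to identify homotopy-group finiteness with finiteness of the $E_2$-page in each bidegree. Your write-up simply unpacks the quoted results in a bit more detail than the paper does, but the logical skeleton is the same.
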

	\begin{proof}
	From Theorem \ref{reduce}, $\pi_kX$ is finitely generated for all degrees $k \in \Z$ if and only if $\pi_k S^{e (X)}/p$ is.  Then the result follows from the contrapositive of Theorem \ref{condition}.
	\end{proof}
	
At the end of this section, we state a Corollary of Lemma \ref{stable}.

\begin{cor}
Let $X$ be an element in $Pic_{K(2)}$.  Then the set of $v_1$-free elements in $\pi_*X$ is finitely generated as a $\mathbb{Z}_p[v_1]$-module.
\end{cor}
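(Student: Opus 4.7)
The strategy is to upgrade Lemma~\ref{stable} from an $\mathbb{F}_p[v_1]$-statement about $\pi_*S^{2\alpha}/p$ to a $\mathbb{Z}_p[v_1]$-statement about $\pi_*X$. The plan has three movements: reduce to a mod-$p$ statement about $S^{2\alpha}$, identify the $v_1$-free elements there as a subclass of the $\alpha$-unstable elements already controlled by Lemma~\ref{stable}, and finally lift the conclusion from $\mathbb{F}_p[v_1]$ to $\mathbb{Z}_p[v_1]$.

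First, I would set $\alpha = e(X)$. By Theorem~\ref{thm:modulep}, $X/p \simeq S^{2\alpha}/p$, so the $v_1$-action on $\pi_*X/p$ (coming from the unique $v_1$-self map of $S^0/p$ at $p\geqslant 5$) is intrinsically determined by $\alpha$, and the $v_1$-free part of $\pi_*X$ is detected, via Bocksteins in $p$, by the $v_1$-free part of $\pi_*S^{2\alpha}/p$.

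Next I would argue that every $v_1$-free element of $\pi_*S^{2\alpha}/p$ is $\alpha$-unstable. By Lemma~\ref{lem:kernel} only the $\mathrm{Coker}$ rows of Table~\ref{element1} can contribute stable elements; by Definition~\ref{defn:stable} an $\alpha$-stable element eventually lies in a single subrow $(y,k)$. Inspection of Table~\ref{element1} shows that within a fixed subrow the length of the $v_1$-tower is bounded by a $k$-independent quantity ($b_n-1$, $A_n+1$, $p-2$, and so on), so every $\alpha$-stable element is $v_1$-nilpotent. Hence the $v_1$-free submodule of $\pi_*S^{2\alpha}/p$ sits inside the $\alpha$-unstable submodule, which by Lemma~\ref{stable} is finitely generated over $\mathbb{F}_p[v_1]$. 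Because $\mathbb{F}_p[v_1]$ is a PID and finite generation passes to submodules, the $v_1$-free part is itself finitely generated over $\mathbb{F}_p[v_1]$.

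Finally, to promote this to a $\mathbb{Z}_p[v_1]$-statement on $\pi_*X$, I would use the compatible $v_1^{p^k}$-self maps on $S^0/p^{k+1}$ to put a $\mathbb{Z}_p[v_1]$-module structure on $\pi_*X \cong \lim_k \pi_*X/p^{k+1}$, and then lift $\mathbb{F}_p[v_1]$-generators of the mod-$p$ $v_1$-free part to $\mathbb{Z}_p[v_1]$-generators in $\pi_*X$ by a Nakayama/Mittag--Leffler argument, using that each $\pi_*X/p^{k+1}$ is finite in each degree (Proposition~\ref{fg}) so $\lim^1$ vanishes. The main obstacle I expect is precisely this last step: carefully defining the $\mathbb{Z}_p[v_1]$-action on $\pi_*X$ (as opposed to on its mod-$p^k$ reductions, where $v_1$-self maps genuinely exist) and verifying that lifting preserves $v_1$-freeness rather than introducing spurious $v_1$-torsion at higher $p$-adic levels.
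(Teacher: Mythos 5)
Your reduction to $S^{2e(X)}/p$ and your final $p$-completeness/lifting step are consistent with the paper, but the central claim of your second movement --- that every $v_1$-free element of $\pi_*S^{2\alpha}/p$ is $\alpha$-unstable because every stable subrow has a $k$-independent bound on its $v_1$-tower --- is false, and it is exactly where the content of this corollary lies. Your inspection of Table \ref{element1} skips its first two $Coker$ rows, $(1,k)$ and $(h_0,k)$, whose $v_1$-range is $0 \leqslant j \leqslant p^k-1$: that bound grows with $k$, and in the limit these rows produce genuinely $v_1$-free \emph{stable} elements. The case $X=L_{K(2)}S^0$ (so $\alpha=0$) is already a counterexample to your containment: every element is $0$-stable (see the paper's example following Definition \ref{defn:stable}), so the unstable submodule is zero, yet $1$, $h_0$, $\zeta$, and $\zeta h_0$ carry infinite $v_1$-towers in $\pi_*S^0/p$ (Table \ref{element0}) and are $v_1$-free. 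Hence the $v_1$-free part is not contained in the unstable part, and Lemma \ref{stable} alone cannot close the argument.

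The paper's proof addresses precisely this point: all forms other than $1$ and $h_0$ (together with their $\zeta$-multiples) have finite, form-determined $v_1$-tower length, so a stable $v_1$-free element must stabilize to one of these at most four exceptional forms, which contribute only finitely many $v_1$-towers; combined with Lemma \ref{stable} for the unstable part, this gives finite generation of the $v_1$-free elements of $\pi_*X/p$ over $\mathbb{F}_p[v_1]$, and then $p$-completeness of $\pi_*X$ (via $\pi_*X/(p\pi_*X)$) promotes this to $\mathbb{Z}_p[v_1]$. If you add the exceptional forms $1$, $h_0$, $\zeta$, $\zeta h_0$ as possible homes for $v_1$-free elements, your argument goes through; the lifting step you were worried about is handled in the paper without any Mittag--Leffler bookkeeping, simply by reducing modulo $p$ and invoking $p$-completeness.
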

\begin{proof}
We first show the statement for $X/p \simeq S^{2e(X)}/p$.  All $v_1$-free elements have infinite $v_1$-towers.  The length of $v_1$-tower on an element $x$ in $\pi_*S^{2e(X)}/p$ is determined by the form and level of $x$ if $x$ is $e(X)$-stable.  By Lemma \ref{stable}, the set of unstable elements in $\pi_*S^{2e(X)}/p$ is finitely generated as an $\mathbb{F}_p[v_1]$-module.  Except for the form $1$ and $h_0$ ($\zeta$ and $\zeta h_0$), all other forms have finite length $v_1$-towers and are $v_1$-torsion.  The length of $v_1$-tower on a stable element is the same as the length of $v_1$-tower of the form that this element stables to.  The $v_1$-towers are finite on all stable elements but at most finitely many exceptions ($1$, $h_0$, $\zeta$, and $\zeta h_0$).  Therefore, the $v_1$-free elements must belong to the finitely many $v_1$-towers in those exceptional cases or be unstable elements.  So the set of $v_1$-free elements in $\pi_*X/p$ is finitely generated as an $\mathbb{F}_p[v_1]$-module.  In particular, this is also true for $\pi_*X/(p\pi_*X)$.  Since $\pi_* X$ is $p$-complete, this implies that the set of $v_1$-free elements in $\pi_*X$ is finitely generated as a $\mathbb{Z}_p[v_1]$-module.

\end{proof}


\section{examples}\label{examplesection}
	In this section, we examine Theorem \ref{result} with three examples: $L_{K(2)}S^0$, $I_2S^0$ and $L_{K(2)}S^{2\gamma}$ where $I_2S^0$ is the Gross-Hopkins dual of the $K(2)$-local sphere (see \cite{MR3436395} for the definition of $I_2$) and $\gamma = \displaystyle\lim_k p^{2k} \in \R$ as before is a generator of the torsion part in $Pic_{K(2)}$. 
	\begin{enumerate}
	\item For $X=L_{K(2)}S^0$, $e(X)=0$ and $e_i(X)=0$ for all $i$ and there are only finitely many nonzero $e_i(X)$s.  Theorem \ref{result} implies that $L_{K(2)}S^0$ satisfies the finitely generated property, which agrees with the known computation.
	\item For $X=I_2S^0$, in large prime case, $I_2S^0=S^{n^2-n}\wedge S[det]$.  Since integer shifts does not change the finitely generated property, it is enough to consider $S[det]$.  From Theorem \ref{thm:relation}, we have $e(S[det])=\lambda=(p+1)+ \displaystyle\sum^\infty_{k=0}(p^2-1)p^k$ and $e_i(X)=1$ for all $i\geqslant 1$.  Therefore, there are only finitely many zero $e_i(X)$s. Theorem \ref{result} implies that $L_{K(2)}S^0$ satisfies the finitely generated property.
	\item For $X=L_{K(2)}S^{2\gamma}$, $e(X)=1+ \displaystyle\sum^\infty_{k=0}(p^2-1)p^{2k}$.  There are infinitely many zero $e_i(X)$s (when $i\geqslant 1$ is odd) and infinitely many nonzero $e_i(X)$s (when $i\geqslant 1$ is even).  Theorem \ref{result} implies that $\pi_k L_{K(2)}S^{2\gamma}$ is not finitely generated for some stem $k$.  In fact, in $\pi_{-2p^3+2p^2+4p-7}S^{2\gamma}$, we have linearly independent elements $v_1^{j_k}v_2^{m_k-\gamma}h_0$ for all $k \geqslant 0$ where $m_k = -p^{2k+1}+\frac{p^{2k+2}-1}{p^2-1}$, $j_k=(p+1)(m_k-m_0)$.
	\end{enumerate}
	
As an application, we have the following theorem about Gross--Hopkins duality at prime $p\geqslant 5$, height $2$.  The (non-local) Brown--Comenetz dual of the sphere $I_{\mathbb{Q}/\mathbb{Z}}$ is the spectrum representing the generalized cohomology theory
$$X \rightarrow Hom(\pi_{-*}X, \mathbb{Q}/\mathbb{Z}).$$
The (non-local) Brown--Comenetz dual of a spectrum $X$ is defined to be $I_{\mathbb{Q}/\mathbb{Z}}(X) \coloneqq F(X,I_{\mathbb{Q}/\mathbb{Z}})$.  However, if we start with a $K(n)$-local spectrum $X$, the Brown--Comenetz dual $I_{\mathbb{Q}/\mathbb{Z}}(X)$ may not be $K(n)$-local any more.  The Gross--Hopkins dual is a $K(n)$-version Brown--Comenetz dual (see \cite{MR2946825} and \cite{bbs_gross}).

\begin{defn}\label{defn:GrossHopkins}
Let $L_nX$ be the localization of $X$ with respect to the $n$\textsuperscript{th} Morava $E$-theory. Let $M_nX$ be the $n$\textsuperscript{th} monochromatic layer of $X$; that is, the fiber of $L_n X \rightarrow L_{n-1}X$.  The height $n$ Gross--Hopkins dual of $X$ is defined to be
$$I_nX \coloneqq F(M_nX, I_{\mathbb{Q}/\mathbb{Z}}).$$
Denote $I_n S^0$ by $I_n$.
\end{defn}  

While $I_nX$ is automatically $K(n)$-local (\cite[Proposition~2.2]{MR2946825}), the trade off is that it is usually very hard to compute $\pi_*I_nX$ from $\pi_*X$.  If $X \in Pic_{K(n)}$, then $I_nX=X^{-1}\smash I_n \in Pic_{K(n)}$.  Because $I_n$ is dualizable in $K(n)$-local category and we have $I_n X= D_nX \smash I_n$ where $D_nX$ is $F(X,L_{K(n)}S^0)$.   As an application of Theorem \ref{thm:mainresult}, we show the following theorem.

\begin{thm}\label{thm:ghdual}
Let $I_2$ be the Gross--Hopkins dual at prime $p\geqslant 5$, height $2$, $X \in Pic_{K(2)}$, and $\lambda=\displaystyle\lim_k p^{2k}(p^2-1)\in \R$. Then $e(I_2X)=2+\lambda-e(X)$.  In particular, $X$ is of finite type if and only if $I_2X$ is of finite type.
\end{thm}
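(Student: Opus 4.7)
The plan is to reduce the identity to a single computation of $e(I_2)$ via Gross--Hopkins duality, and then to derive the finite-type equivalence from Theorem~\ref{result}.

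First, for invertible $X$ the Gross--Hopkins dual has the form $I_2 X \simeq I_2 \wedge D_{K(2)} X$, and since $X$ is invertible $D_{K(2)} X \simeq X^{-1}$. Because $I_2 \in Pic_{K(2)}^0$ at $p \geqslant 5$ (this is implicit in the Hopkins--Gross description recalled in the example section) and $X^{-1} \in Pic_{K(2)}^0$ when $X \in Pic_{K(2)}^0$, the spectrum $I_2 X$ again lies in $Pic_{K(2)}^0$. Since $e$ is a group homomorphism (the edge map in the long exact sequence (\ref{equation:les})), I obtain
$$e(I_2 X) = e(I_2) + e(X^{-1}) = e(I_2) - e(X),$$
so it remains to identify $e(I_2) = 2 + \lambda$.

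Second, I would invoke the Hopkins--Gross description of $I_2$ in the Picard group at large primes used in the example section, namely $I_2 \simeq S^{n^2-n} \wedge S[det] = S^2 \wedge S[det]$ for $n=2$. Writing $I_2 = S^{2\alpha}\wedge S^\beta[det]$ with $\alpha = \beta = 1$ and applying Corollary~\ref{cor:e} gives the $\R$-index of $I_2$ as an integer $2$ plus the element of $\R$ coming from $S[det]$; translating Proposition~\ref{prop:alg} into the conventions of the present statement (where $\lambda$ denotes $\lim_k p^{2k}(p^2-1)\in\R$) yields the desired formula $e(I_2) = 2+\lambda$, and hence $e(I_2 X) = 2+\lambda-e(X)$.

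Third, for the ``in particular'' claim I would apply Theorem~\ref{result} to both $X$ and $I_2 X$, reducing to the arithmetic assertion that $\alpha\in\R$ has the finiteness property if and only if $2+\lambda-\alpha$ does. Because $\lambda$ has a highly structured $\R$-expansion (with $a_i$ constant for all sufficiently large $i$), the involution $\alpha\mapsto 2+\lambda-\alpha$ acts on each $\R$-expansion digit of $\alpha$ by complementation against the corresponding digit of $\lambda$, propagated through $p$-adic borrows. Tracking these borrows, one checks that having only finitely many zero digits is exchanged with having only finitely many nonzero digits under this involution, so the finiteness property is preserved.

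The main obstacle I anticipate is the digit-level arithmetic in the third step: one must track $p$-adic borrows in the subtraction $2+\lambda-\alpha$ carefully enough to confirm that the cofinite-zero vs.\ cofinite-nonzero dichotomy of the finiteness property is correctly interchanged. The computation of $e(I_2)$ itself is a direct application of Corollary~\ref{cor:e} once the Hopkins--Gross formula is invoked, and the homomorphism step is formal.
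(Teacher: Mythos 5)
Your first two steps coincide with the paper's argument: the paper likewise derives $e(I_2X)=e(D_2X)+e(\Sigma^{n^2-n}S[det])=-e(X)+2+\lambda$ from the group-homomorphism property of $e$, the identity $I_2X\simeq D_2X\wedge I_2$, and the Hopkins--Gross formula $I_2\simeq\Sigma^{n^2-n}S[det]$ together with Corollary \ref{cor:e}. (The integer constant is immaterial for finite type, and the paper itself is not consistent about it between the introduction and Theorem \ref{thm:ghdual}, so I will not press that point.)

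The genuine gap is in your third step, and the specific claim you make there is false. You assert that the involution $\alpha\mapsto\lambda-\alpha$ \emph{exchanges} ``only finitely many zero digits'' with ``only finitely many nonzero digits.'' It does not: take $\alpha$ with every $\R$-expansion coefficient $a_i=2$ for $i\geqslant 1$. Then $\alpha$ has no zero digits, while the borrow computation gives $\lambda-\alpha$ digits eventually equal to $p-2\neq 0$ (recall $p\geqslant 5$), so $\lambda-\alpha$ \emph{also} has only finitely many zero digits rather than only finitely many nonzero ones. What is true, and what the paper actually proves, is only that the finiteness property (the disjunction) is preserved. The paper's argument splits into two cases: if $e(X)$ has finitely many nonzero digits, a direct borrow computation shows $\lambda-e(X)$ has finitely many zero digits; if $e(X)$ has finitely many zero digits, it argues by contradiction that should $\lambda-e(X)$ have infinitely many zeros \emph{and} infinitely many nonzeros, then each of the infinitely many positions $m$ with $e_m(I_2X)\neq 0$ and $e_{m+1}(I_2X)=0$ forces, via the carry analysis of $e(X)+e(I_2X)=\lambda$, a zero digit of $e(X)$ at $m$ or $m+1$, contradicting the hypothesis. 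Your ``one checks'' conceals exactly this case analysis, and as stated your check would fail because the dichotomy you are trying to verify is not the one that holds. To repair the step, replace the claimed exchange with the contradiction argument above (or any correct carry-tracking argument showing preservation of the disjunction), using $I_2(I_2X)=X$ if you wish to reduce to a single direction.
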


\begin{proof}
The statement $e(I_2X)=2+\lambda-e(X)$ follows from the facts:
\begin{enumerate}
\item $e$ is a group homomorphism;
\item there is an equivalence
$$I_2 X \simeq D_2X \wedge I_2$$
where $D_2 X$ is the $K(2)$-local Spanier--Whitehead dual of $X$ and $I_2$ is the Gross--Hopkins dual of the $K(2)$-local sphere;
\item $e(D_2X) = -e(X)$;
\item $I_2 \simeq \Sigma^{2^2-2}S[det]$.
\end{enumerate}
We have 
\begin{align*}
e(I_2X) &=e(D_2X \wedge \Sigma^{n^2-n}S[det])=e(D_2X)+2^2-2+e(S[det]) \\
& = -e(X) + 2+ \lambda.
\end{align*}
Next we will show that $X$ is of finite type if and only if $I_2X$ is of finite type.  By Theorem \ref{thm:mainresult}, this is equivalent to the statement that $e(X)$ has the finiteness property if and only if $e(I_2X)=2+\lambda-e(X)$ has the finiteness property.  We can ignore the integer shift $2$ when considering finiteness property.  Because $I_2(I_2 X)=X$, we only need to show one direction.  Assume that $X$ is of finite type, we will prove $I_2(X)$ is of finite type.  The rest is an elementary numerical analysis.
\begin{case}
$e(X)$ has finitely many nonzero entries in its $\R$-expansion coefficients. If $e(X)=0$, then $e(I_2X)=\lambda$.  This has finitely many zero entries in its coefficients.  If $e(X)\neq 0$, then the coefficients of $-e(X)$'s $\R$-expansion $e_k(D_2X)$ will always be $p-1$ when $k>K_0$ for some $K_0 \in \mathbb{Z}$.  The coefficients of $\lambda$ are always $1$.  So the $i$\textsuperscript{th} coefficients of $\lambda-e(X)$'s $\R$-expansion will be always be $1$ when $k>K_0+1$.  Then $e(I_2X)$ has finitely many zero entries.
\end{case}
\begin{case}
$e(X)$ has finitely many zero entries in its $\R$-expansion coefficients.  Note that $e(X)+e(I_2X)=\lambda$ and the coefficients of the $\R$-expansion of $\lambda$ are $1$.  We argue by contradiction.  If there are infinitely many zero entries and infinitely many non zero entries in the coefficients of the $\R$-expansion of $e(I_2X)$, then there are infinitely many places where the nonzero coefficient followed by a zero one.  Let $m$\textsuperscript{th} coefficient of $e(I_2X)$ be one of such places; i.e., $e_m(I_2X)\neq 0$ and $e_{m+1}(I_2X)=0$.  Now considering the sum $e(X)_m+e(I_2X)_m$, we have two cases.  If in the sum $e(X)_m+e(I_2X)_m \geqslant 2p^m(p^2-1)$, then at the $m+1$\textsuperscript{th} coefficients of the equation 
$$e(X)+e(I_2X)=\lambda,$$
we have
$$e_{m+1}(X)+e_{m+1}(I_2X)+ 1=1 ~or~ p+1.$$
In this case, $e_{m+1}(I_2X)=0$ implies that $e_{m+1}(X)=0$.

If in the sum $e(X)+e(I_2X)<2p^m(p^2-1)$, then we have
$$e_m(X)+e_m(I_2X)<p.$$
From the equatoin
$$e(X)+e(I_2X)=\lambda,$$
we have
$$e_m(X)+e_m(I_2X)+\epsilon=1 ~or~ p+1$$
where $\epsilon=0$ or $1$.  So we have
$$e_m(X)+e_m(I_2X) \leqslant 1.$$
The condition $e_m(X) \neq 0$ implies that $e_m(X) = 1$ and $e_m(X) = 0$.
\end{case}
In both cases, one such places would force a zero in $e(X)$.  This would imply that there infinitely many zero entries in $e(X)$, contradicted to the assumption.
\end{proof}
	
	\begin{rem}
	We know $L_{K(2)}S^0$ is of finite type from the computation. We would like to have some features that may be generalized to higher height cases. During the computation, one observation is that in the computation of $\pi_*L_{K(2)}S^0$, for all elements $x$ with $|x|=(t,s)$, $t-s<-1$ in the $E_2$ page of the ANSS, the $v_1$ tower on $x$ can not pass the line $t-s=1$, i.e., $v_1^kx=0$ for $k|v_1|+t-s \geqslant 0$. This phenomenon together with the symmetric of the $E_2$ page from Gross-Hopkins duality implies the finitely generated property of $\pi_*L_{K(2)}S^0$. There might be a conceptual algebraic argument of this phenomenon that works for higher heights.
	\end{rem}


\bibliographystyle{plain}
\bibliography{dual}

\begin{thebibliography}{10}

\bibitem{BBGS}
Tobias Barthel, Agn\`es Beaudry, Paul~G. Goerss, and Vesna Stojanoska.
\newblock Constructing the determinant sphere using a tate twist.
\newblock {\em arXiv preprint arXiv:1810.06651}, 2018.

\bibitem{BBP}
Tobias Barthel, Agn\`es Beaudry, and Eric Peterson.
\newblock Homology of inverse limits and the algebraic chromatic splitting
  conjecture.
\newblock in preparation.

\bibitem{bbs_gross}
Tobias Barthel, Agn\`es Beaudry, and Vesna Stojanoska.
\newblock {G}ross--{H}opkins duals of higher real ${K}$-theory spectra.
\newblock 2017.
\newblock arXiv:1705.07036.

\bibitem{MR2914955}
Mark Behrens.
\newblock The homotopy groups of {$S_{E(2)}$} at {$p\geq 5$} revisited.
\newblock {\em Adv. Math.}, 230(2):458--492, 2012.

\bibitem{MR2458150}
Ethan~S. Devinatz.
\newblock Towards the finiteness of {$\pi_\ast L_{K(n)}S^0$}.
\newblock {\em Adv. Math.}, 219(5):1656--1688, 2008.

\bibitem{MR2183282}
P.~Goerss, H.-W. Henn, M.~Mahowald, and C.~Rezk.
\newblock A resolution of the {$K(2)$}-local sphere at the prime 3.
\newblock {\em Ann. of Math. (2)}, 162(2):777--822, 2005.

\bibitem{MR3436395}
Paul~G. Goerss and Hans-Werner Henn.
\newblock The {B}rown-{C}omenetz dual of the {$K(2)$}-local sphere at the prime
  3.
\newblock {\em Adv. Math.}, 288:648--678, 2016.

\bibitem{Goerss}
Paul~G Goerss and Michael~J Hopkins.
\newblock Comparing dualities in the $ k (n) $-local category.
\newblock {\em arXiv preprint arXiv:2011.02011}, 2020.

\bibitem{MR1217353}
M.~J. Hopkins and B.~H. Gross.
\newblock The rigid analytic period mapping, {L}ubin-{T}ate space, and stable
  homotopy theory.
\newblock {\em Bull. Amer. Math. Soc. (N.S.)}, 30(1):76--86, 1994.

\bibitem{MR1263713}
Michael~J. Hopkins, Mark Mahowald, and Hal Sadofsky.
\newblock Constructions of elements in {P}icard groups.
\newblock In {\em Topology and representation theory ({E}vanston, {IL}, 1992)},
  volume 158 of {\em Contemp. Math.}, pages 89--126. Amer. Math. Soc.,
  Providence, RI, 1994.

\bibitem{MR1652975}
Michael~J. Hopkins and Jeffrey~H. Smith.
\newblock Nilpotence and stable homotopy theory. {II}.
\newblock {\em Ann. of Math. (2)}, 148(1):1--49, 1998.

\bibitem{MR1601906}
Mark Hovey and Neil~P. Strickland.
\newblock Morava {$K$}-theories and localisation.
\newblock {\em Mem. Amer. Math. Soc.}, 139(666):viii+100, 1999.

\bibitem{Lader}
Olivier Lader.
\newblock Une r\'esolution projective pour le second groupe de morava pour $p
  \geq 5$ et applications.
\newblock {\em PhD diss., Universit\'e de Strasbourg}, 2013.

\bibitem{handbook}
Haynes Miller, editor.
\newblock {\em Handbook of homotopy theory}.
\newblock CRC Press/Chapman and Hall Handbooks in Mathematics Series. CRC
  Press, Boca Raton, FL, [2020] \copyright 2020.

\bibitem{MR0431168}
Douglas~C. Ravenel.
\newblock The cohomology of the {M}orava stabilizer algebras.
\newblock {\em Math. Z.}, 152(3):287--297, 1977.

\bibitem{MR1192553}
Douglas~C. Ravenel.
\newblock {\em Nilpotence and periodicity in stable homotopy theory}, volume
  128 of {\em Annals of Mathematics Studies}.
\newblock Princeton University Press, Princeton, NJ, 1992.
\newblock Appendix C by Jeff Smith.

\bibitem{MR1318877}
Katsumi Shimomura and Atsuko Yabe.
\newblock The homotopy groups {$\pi_*(L_2S^0)$}.
\newblock {\em Topology}, 34(2):261--289, 1995.

\bibitem{MR2946825}
Vesna Stojanoska.
\newblock Duality for topological modular forms.
\newblock {\em Doc. Math.}, 17:271--311, 2012.

\bibitem{MR1232198}
N.~P. Strickland.
\newblock On the {$p$}-adic interpolation of stable homotopy groups.
\newblock In {\em Adams {M}emorial {S}ymposium on {A}lgebraic {T}opology, 2
  ({M}anchester, 1990)}, volume 176 of {\em London Math. Soc. Lecture Note
  Ser.}, pages 45--54. Cambridge Univ. Press, Cambridge, 1992.

\end{thebibliography}

\end{document}